\newtheorem{thm}{Theorem}[section]
\newtheorem{prop}[thm]{Proposition}
\newtheorem{lem}[thm]{Lemma}
\theoremstyle{definition}
\newtheorem{defi}[thm]{Definition}
\newtheorem{rem}[thm]{Remark}
\newtheorem{ex}[thm]{Example}
\newtheorem{nota}[thm]{Notation}
\DeclareMathOperator{\supp}{supp}
\DeclareMathOperator{\spa}{span}
\DeclareMathOperator{\Patch}{Patch}
\DeclareMathOperator{\Tiling}{Tiling}
\DeclareMathOperator{\cl}{Cl}
\DeclareMathOperator{\cpt}{Cpt}
\DeclareMathOperator{\Map}{Map}
\DeclareMathOperator{\UD}{UD}
\DeclareMathOperator{\Del}{Del}
\newcommand{\LDRd}{\overset{\Rd}{\rightarrow}}
\newcommand{\Zpo}{\mathbb{Z}_{>0}}
\newcommand{\calT}{\mathcal{T}}
\newcommand{\calU}{\mathcal{U}}
\newcommand{\calA}{\mathcal{A}}
\newcommand{\calP}{\mathcal{P}}
\newcommand{\calQ}{\mathcal{Q}}
\newcommand{\calL}{\mathcal{L}}
\newcommand{\e}{\varepsilon}
\newcommand{\Rd}{\mathbb{R}^d}
\newcommand{\LD}{\overset{\mathrm{LD}}{\rightsquigarrow}}
\newcommand{\MLD}{\overset{\mathrm{MLD}}{\leftrightsquigarrow}}
\newcommand{\rhoT}{\rho_{\mathbb{T}}}
\newcommand{\sci}{\wedge}
\title[A new relation]{A new relation between geometric and dynamical properties of objects such as
tilings and Delone sets}
\author{Yasushi Nagai}
\address{Montanuniversit\"at, Department Mathematik und Informationstechnologie,
Lehrstuhl f\"ur Mathematik und Statistik,
Franz Josef Strasse 18, A-8700 Leoben, Austria}
\email{yasushi.nagai@unileoben.ac.at}
\date{\today}							
\thanks{The author was supported by the project I3346 of the Japan Society for the Promotion of Science (JSPS) and the Austrian Science Fund (FWF)}
\begin{document}
\maketitle

\begin{abstract}
    Let $\calP$ be an object such as tiling, Delone set and weighted Dirac comb.
    There corresponds a dynamical system to $\calP$, called the corresponding dynamical
    system. Such dynamical systems are geometric analogues of symbolic dynamics.
     It is well-known that there are
    correspondences between geometric properties of $\calP$ and properties of the
    corresponding dynamical system. In this article we give a new  correspondence.
    In other words, we characterize the property that the group of topological
    eigenvalues for the corresponding dynamical system
     is not discrete, in terms of a geometric property of $\calP$.
\end{abstract}

\section{Introduction}
The discovery of quasicrystals showed that there are non-periodic mathematical
structure that have long-range order. Since the discovery, objects such as tilings
and Delone sets that are ``ordered'' have been studied intensively.
Here, a tiling is a cover of $\Rd$ by tiles such as polygons that overlap only on their
boundaries. A Delone set, sometimes called a separated net, is a subset $D$ of $\Rd$ such
that distances of two distinct points are bounded from below and
there are no arbitrary large balls that do not intersect $D$.

The term ``ordered'' has many interpretations.
The most important interpretation is that the objects are pure point diffractive.
This is a mathematical description of the physical diffraction pattern to have
only bright spots without diffuse background.

More vaguely, objects are ``ordered'' if local environments are correlated with
one another, even if they are widely separated.
Being pure point diffractive can be understood under this vague interpretation, since
by \cite{solomyak1998spectrum}, \cite{MR2084582}, and \cite{MR2135448}
that condition is implied or equivalent to several forms of almost periodicity.

The goal of this article is to relate a interpretation of ``order'' with a dynamical
property of objects such as tilings and Delone sets.
This is an extension of \cite{Nagai1}.

What is the dynamical property of those objects? The historical origin is in
symbolic dynamics. Let $w$ be a word, that is, a map $w\colon\mathbb{N}\rightarrow\calA$,
where $\calA$ is a finite set, and $\sigma\colon\calA^{\mathbb{N}}\rightarrow\calA^{\mathbb{N}}$ is
a left-shift. Define a compact set $X_{w}=\overline{\{\sigma^n(w)\mid n\in\mathbb{N}\}}$,
where we take the closure with respect to the product topology on $\calA^{\mathbb{N}}$.
The restriction of the map $\sigma$ on $X_{w}$ defines a topological dynamical system.

Similarly, consider an object $\calP$ such as tiling and Delone set in $\Rd$.
For each $x\in\Rd$, $\calP+x$ denotes the translation of $\calP$ by $x$.
We set $X_{\calP}=\overline{\{\calP+x\mid x\in\Rd\}}$,
where we take a closure with respect
to a topology similar to the product topology in $\calA^{\mathbb{N}}$.
The group
$\Rd$ acts on $X_{\calP}$ by translation, and we get a topological dynamical system
$(X_{\calP},\Rd)$. The dynamical properties are the properties of this dynamical system.
It is well-known that there are correspondences between geometric properties of $\calP$
(properties on how patterns are distributed in $\calP$) and dynamical properties
of $\calP$. 

In this article we first study a Delone set $D$ and prove Theorem \ref{thm_nonlinearity}
in Section \ref{section_result_for}.
In this theorem we relate a dynamical condition and a condition on ``order'',
which gives a new correspondence between geometric and dynamical properties.
The precise statement of this phenomenon of ``order'' can be found in
Theorem \ref{thm_nonlinearity},
but in a plain language, it means as follows.
Consider three positive real numbers $L_1,L_2$ and $R$ with the following
property: take $x\in\Rd$ arbitrarily and consider the pattern $\calP=D\cap B(x,R)$
around the point $x$ with
the spherical window of radius $R$; then relative to $x$, there is a ``forbidden area''
of appearance of $\calP$ in $D$, where the translates of $\calP$ never appears;
the forbidden area has a form of stripe, consisting of ``bands'' of width $2L_2$ with
intervals $L_1$.
We will prove that, if we assume a dynamical condition that the set of eigenvalues is
 not discrete in $\Rd$, we can always take such $L_1,L_2$ and $R$ in such a way that
 $L_1$ and $L_2$ are  close to
any two given positive real number with respect to any error $\e$.
The converse will also be proved under a mild assumption.
The existence of such `forbidden area'' means that,
although the Delone set $D$ may not be non-periodic, there emerges a periodic structure
of stripe, that is shown in Figure \ref{figure_nonlinearity} in page
\pageref{figure_nonlinearity}.
We cannot tell what exactly happens in the distance, but we can say non-existence of
patterns in forbidden area. In this sense the behaviors of two distant parts of $D$
are correlated and $D$ has ``order'.

After proving this theorem, in Section \ref{section_generalization}
we prove the same theorem for another objects such as
tilings (with or without labels) and Delone multi-sets.
We simply ``convert'' these objects into Delone sets and use
Theorem \ref{thm_nonlinearity}.
We use a framework of abstract pattern space to state a theorem that is one-level more
 abstract than
Theorem \ref{thm_nonlinearity}.
In Theorem \ref{theorem_stripe_structure} and Theorem \ref{thm_converse_str_str},
we prove the similar equivalence for ``abstract patterns'', which is
an abstract notion that includes objects such as tilings and Delone sets as examples.

We finish the article with an appendix on the framework of abstract pattern spaces.

In \cite{Nagai1}, the author proved a weaker version of Theorem \ref{theorem_stripe_structure} for tilings.
The result there is weaker than those in this article in the sense that
we cannot take $R_1$ and $R_2$ arbitrarily. In this article we omit such a restriction
and also prove the converse. Furthermore, we prove such a correspondence for general
abstract patterns.

\begin{nota}
     In this article $\|\cdot\|$ denotes the Euclidean norm. The standard inner product
     of $\Rd$ is denoted by $\langle\cdot,\cdot\rangle$.

      Let $x$ be a point in $\Rd$ and $r$ be a positive real number. We define a closed
 ball
      \begin{align*}
          B(x,r)=\{y\in\Rd\mid \|x-y\|\leqq r\}.
      \end{align*}
      If $x=0$, the ball $B(x,r)$ will be denoted by $B_r$.

\end{nota}

\section{A result for Delone sets}\label{section_result_for}

	\begin{defi}\label{def_metric_on_T}
		We endow a metric $\rhoT$ on $\mathbb{T}$ by identifying $\mathbb{T}$ with $\mathbb{R}/2\pi\mathbb{Z}$. In other words we set 
		\begin{align*}
			\rhoT(e^{2\pi i\theta}, e^{2\pi i \theta'})=\min_{n\in\mathbb{Z}}|\theta-\theta'+n|
		\end{align*}
		for any $\theta, \theta'\in\mathbb{R}$. This gives a well-defined metric on $\mathbb{T}$ that generates the standard topology of $\mathbb{T}$.
	\end{defi}

       We first recall the definition of Delone set and FLC.
      \begin{defi}
            Let $D$ be a subset of $\Rd$.
             \begin{enumerate}
	      \item For each $R>0$, we say $D$ is \emph{$R$-relatively dense} if, for any
		    $x\in\Rd$ there is $y\in D$ with $\|x-y\|<R$.
		    The set $D$ is said to be \emph{relatively dense}
		    if it is $R$-relatively
		    dense for some $R>0$.
	     \item For each $r>0$, we say $D$  is \emph{$r$-uniformly discrete}
		   if for each
		   $x,y\in D$, we have either $x=y$ or $\|x-y\|>r$.
		    The set $D$ is said to be \emph{uniformly discrete} if it is $r$-uniformly
		   discrete for some $r>0$.
	     \item For each $R,r>0$, we say $D$ is a \emph{$(R,r)$-Delone set} if $D$ is
		   $R$-relatively dense and $r$-uniformly discrete. The set $D$ is
		   called a \emph{Delone set}
		   if it is a $(R,r)$-Delone set for some $R>0$ and
		   $r>0$.
	     \end{enumerate}
      \end{defi}

      \begin{defi}
           Let $D$ be a Delone set in $\Rd$. We say $D$ has \emph{finite local complexity
           (FLC)} if
          for any compact $K\subset \Rd$, the set
          \begin{align*}
	        \{(D-x)\cap K\mid x\in\Rd\}
	  \end{align*}
           is finite modulo translation.
      \end{defi}

      For the space of all uniformly discrete sets one may define a uniform
      structure, the topology
      by which is called the \emph{local matching topology}
      (see for example \cite{Solomyak_dynamics}).
      The local matching topology will be defined in Definition \ref{def_local_mat_top}
      in a more abstract setting, but for uniformly discrete sets, the metric defined by
      \begin{align*}
            \rho(D_1,D_2)=\inf\Delta(D_1,D_2),
      \end{align*} 
      where
      \begin{align*}
            \Delta(D_1,D_2)=\{r\in (0,\frac{1}{\sqrt{2}})\mid
            \text{there are $x,y\in B_r$ such that}\\
             \text{$(D_1+x)\cap B_{1/r}=(D_2+y)\cap B_{1/r}$)}\},
      \end{align*}
      defines a local matching topology.
      
       \begin{defi}
	    Let $D$ be a Delone set. We define the continuous hull $X_D$ of $D$ via
	     \begin{align*}
	            X_D=\overline{\{D+x\mid x\in\Rd\}},
	     \end{align*}
	     where the closure is taken with respect to the local matching topology.
	     The dynamical system $(X_D,\Rd)$, which is obtained by the action of $\Rd$
	     on $X_D$ by translation, is called the \emph{corresponding dynamical system
	for $D$}.
       \end{defi}
       
      We then define subsets of $\Rd$ that is ``stripe-shaped''.
       \begin{defi}\label{def_S_abR1R2}
	    Take $a,b\in\Rd$ such that $\|a\|=1$.
	    Take also positive real numbers $L_1,L_2>0$.
	    Set
	     \begin{align*}
	          S(a,b,L_1,L_2)=\{x\in\Rd\mid \langle x-b,a\rangle\in
	              L_1\mathbb{Z}+[-L_2,L_2]\}.
	     \end{align*}
       \end{defi}

       \begin{rem}
	     $S(a,b,L_1,L_2)$ is the union of ``bands'' with width $2L_2$ and
	     intervals $L_1$, which is depicted in
	Figure \ref{figure_nonlinearity} in page
	     \pageref{figure_nonlinearity}.
       \end{rem}

       The following notion describes a property of the distribution of patterns
       in Delone sets.
       \begin{defi}\label{def_stripe_structure_for_tilings}
	    Let $D$ be a Delone set of $\Rd$ and $L_1,L_2>0$.
	   We say $D$ has $(L_1,L_2)$-stripe structure
	    if there are $a\in\Rd$ with $\|a\|=1$ and $R>0$ such that
	     \begin{align}
	          \{y\in\Rd\mid (D-x)\cap B(0,R)=(D-y)\cap B(0,R)\}
	           \subset S(a,x,L_1,L_2)\label{eq_def_stripe_structure}
	     \end{align}
	     for each $x\in\Rd$.
       \end{defi}

       \begin{rem}
	    In plain language, the inclusion \eqref{eq_def_stripe_structure}
	means that, if we take a subset $E\subset D$ around the point $x$
	which is large enough, there is a ``forbidden area'' of the appearance of
	the translate of $E$ in $D$.
	The forbidden area is a periodic one which is obtained by
	juxtaposing bands of width $2L_2$.
       (See Figure \ref{figure_nonlinearity} in page \pageref{figure_nonlinearity}.)
       \end{rem}

       Now we show the following theorem.

\begin{thm}\label{thm_nonlinearity}
       Take a Delone set $D$ which has FLC.
       Consider the following two conditions:
      \begin{enumerate}
       \item  $0\in\Rd$ is a limit point of the group of eigenvalues 
	             of the corresponding dynamical system $(X_D,\Rd)$.
	      In other words, the group of all topological eigenvalues is not
	      discrete in $\Rd$.
	      \item For any
       $R_1,R_2,\e>0$, there are $L_1,L_2>0$ such that
      \begin{enumerate}
       \item $|R_j-L_j|<\e$ for each $j=1,2$, and
	     \item $D$ has $(L_1,L_2)$-stripe structure.
      \end{enumerate} 
      \end{enumerate}
       Then the first condition always implies the second and if $D$ is repetitive
      the second one implies the first.
\end{thm}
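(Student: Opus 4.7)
For $(1) \Rightarrow (2)$, I would first handle the trivial range $R_2 \geq R_1/2$: there any $(L_1, L_2)$ close to $(R_1, R_2)$ satisfies $L_2 \geq L_1/2$, forcing $S(a, x, L_1, L_2) = \Rd$, so the stripe structure holds trivially with any choice of $a$ and $R$. In the remaining case, I exploit the group structure of the set $E$ of topological eigenvalues: since $0$ is a limit point of $E$, there exists $\eta \in E \setminus \{0\}$ with $|\eta|$ arbitrarily small, and choosing $k := \lfloor (R_1 |\eta|)^{-1}\rfloor$ produces $\xi := k\eta \in E$ with $|\xi|^{-1} \in (R_1 - \e, R_1 + \e)$. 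Setting $L_1 := |\xi|^{-1}$ and $a := \xi/|\xi|$, I take a continuous eigenfunction $f \colon X_D \to S^1$ with eigenvalue $\xi$; uniform continuity of $f$ on the compact space $X_D$ supplies an $R > 0$ such that $(D-x) \cap B(0,R) = (D-y) \cap B(0,R)$ forces $|e^{2\pi i \langle \xi, x\rangle} - e^{2\pi i \langle \xi, y\rangle}|$ below $2\sin(\pi R_2 / L_1)$. Unwinding via $\xi = a/L_1$ yields $\langle y - x, a\rangle \in L_1 \mathbb{Z} + [-R_2, R_2]$, giving the $(L_1, R_2)$-stripe structure.

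For the converse $(2) \Rightarrow (1)$ under repetitivity, I would produce eigenvalues of arbitrarily small magnitude, making $0$ a limit point of $E$. For each fixed large $R_1$, apply condition~(2) with $R_2^{(n)} \to 0$ and $\e^{(n)} \to 0$, yielding stripe structures $(a_n, L_1^{(n)}, L_2^{(n)}, R^{(n)})$ with $L_1^{(n)} \to R_1$ and $L_2^{(n)} \to 0$; by compactness of the unit sphere, some subsequence has $a_n \to a^*$. The candidate eigenvalue is $\xi^* := a^*/R_1$, of magnitude $1/R_1$. To confirm that $\xi^*$ is a topological eigenvalue, I would invoke the standard characterization for FLC repetitive Delone sets: $\xi$ is a topological eigenvalue iff for every $\delta > 0$ there exists $R$ such that $(D-x)\cap B(0,R) = (D-y)\cap B(0,R)$ implies $\rhoT(e^{2\pi i \langle \xi, x\rangle}, e^{2\pi i \langle \xi, y\rangle}) < \delta$. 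Each stripe structure at stage $n$ provides exactly this criterion for the approximate eigenvalue $\xi_n := a_n/L_1^{(n)}$ with error $L_2^{(n)}/L_1^{(n)}$, and letting $R_1 \to \infty$ then delivers eigenvalues of arbitrarily small magnitude.

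The decisive obstacle is the passage from the approximate-eigenvalue property of $\xi_n$ to the exact property of $\xi^*$. The naive bound
\begin{equation*}
    \rhoT\bigl(e^{2\pi i \langle \xi^*, v\rangle}, 1\bigr)
    \leq \rhoT\bigl(e^{2\pi i \langle \xi_n, v\rangle}, 1\bigr)
         + |\xi^* - \xi_n|\,\|v\|
\end{equation*}
fails because the relevant return vectors $v = y - x$ form a relatively dense (hence unbounded) subset of $\Rd$, while $|\xi^* - \xi_n|$ is small but nonzero. My plan is to prove a rigidity lemma using FLC and repetitivity: once $L_2/L_1$ is sufficiently small, the direction $a$ (up to sign) and the period $L_1$ of any stripe $S(a, 0, L_1, L_2)$ forced to contain all sufficiently long $R$-return vectors of $D$ are pinned down within a perturbation far smaller than $|\xi^* - \xi_n|$. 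This should force $(a_n, L_1^{(n)})$ to stabilize, so that $\xi_n$ is eventually constant and equal to $\xi^*$, making $\xi^*$ an exact eigenvalue. This rigidity step, essentially saying that approximate-eigenvalue directions at a fixed magnitude form a discrete set, is the technical heart of the converse direction and is exactly where repetitivity is indispensable.
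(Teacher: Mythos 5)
Your first direction is essentially correct and is a streamlined variant of the paper's argument: where the paper develops two lemmas showing that $\bigcap_{r>0}\overline{\spa_{\mathbb{Z}}G\cap B(0,r)}$ is a nontrivial vector subspace and then picks an eigenvalue of norm close to $1/R_1$, you get the same eigenvalue directly as an integer multiple $k\eta$ of a tiny eigenvalue $\eta$; and where the paper cites Kellendonk's characterization of eigenvalues as weakly $D$-equivariant characters, you re-derive the needed half of it from uniform continuity of a unimodular continuous eigenfunction on the compact hull. Both substitutions are legitimate (and your preliminary reduction to the case $R_2<R_1/2$ is harmless, since $S(a,x,L_1,L_2)=\Rd$ once $2L_2\geq L_1$).

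The converse, however, has a genuine gap, and it sits exactly where you place ``the technical heart.'' Your plan takes $R_2^{(n)}\to 0$, extracts approximate eigenvalues $\xi_n=a_n/L_1^{(n)}$, and hopes a rigidity lemma forces $(a_n,L_1^{(n)})$ to stabilize so that $\xi^*=\lim\xi_n$ is an exact eigenvalue. That rigidity claim is not only unproven but fails precisely in the situation the theorem describes: if the conclusion holds, there are exact eigenvalues $\xi'$ of arbitrarily small norm, and $\xi_n+\xi'$ then satisfies the same approximate-eigenvalue (stripe) condition with essentially the same error and essentially the same magnitude, so the admissible $(a,L_1)$ at each stage form a set at least as rich as a translate of those tiny eigenvalues --- nothing pins $\xi_n$ down to a single point, and the term $|\xi^*-\xi_n|\,\|v\|$ over the unbounded set of return vectors cannot be controlled. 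The paper avoids any limit over $n$ altogether: it uses \emph{one} stripe structure with $L_1>4L_2$ to produce a single character $\chi_{a_0/L_1}$ that is a $1/4$-approximate eigenvalue on the occurrence set $E$ (shown to be a Delone set with $D\LDRd E$ via repetitivity, Lemma \ref{lem1_for_converse_stripe_str}), and then proves that such a crude approximate eigenvalue on a repetitive Delone set is automatically \emph{exact}. The mechanism is Lemma \ref{lem2_stripe_str_converse}: the phase defect $\theta$ satisfies the parallelogram identity $a-b=c-d\Rightarrow \theta(a)-\theta(b)=\theta(c)-\theta(d)$ and is bounded, so an extremal/pigeonhole argument yields, for every $\e>0$, a locally derived sub-Delone set $D_\e$ on which $\theta$ oscillates by less than $\e$, bootstrapping the error from $1/4$ down to $0$ and establishing weak equivariance. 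Letting $R_1\to\infty$ then gives eigenvalues accumulating at $0$. This ``quarter-period approximate eigenvalue implies exact eigenvalue'' step is the missing idea in your proposal, and without it (or a correct substitute) the converse does not close.
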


			\begin{figure}[h]
		\begin{center}
		\includegraphics[width=1.0\columnwidth]{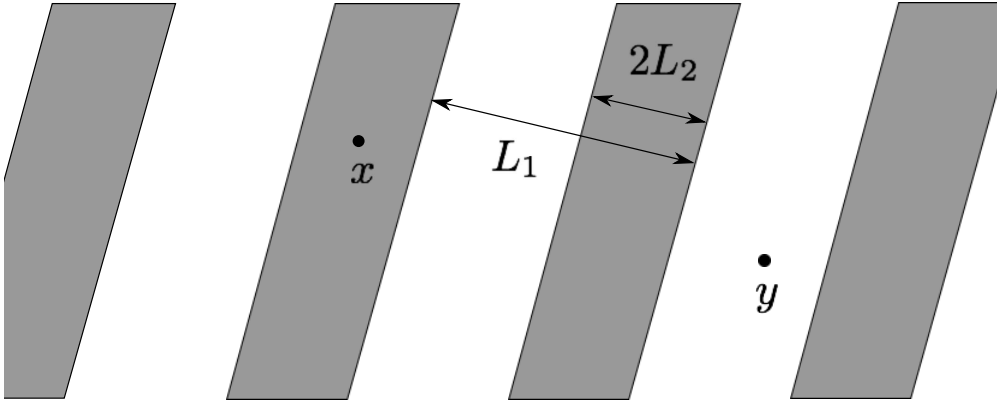}%
		\caption{$(L_1,L_2)$-stripe structure. The situation of the tiling
		 $\mathcal{T}$ around the point $x$ is different from the one around the
		 points, such as $y$, outside the shaded region.}
		\label{figure_nonlinearity}
		\end{center}
		\end{figure}

\begin{rem}
        Even if we know $\calT$ has stripe structure,
       we do not know how large the $R>0$ in Definition
        \ref{def_stripe_structure_for_tilings} is.
      If the set $D$ is a Meyer set, then
       by the characterization of Meyer sets
      \cite{MR1460032}, we see
       \begin{align*}
	    \{\chi\in(\mathbb{R}^d)^{\hat{}}\mid \text{for any $x\in D$, we have
	     $|\chi(x)-1|<\e$}\}
       \end{align*}
       is relatively dense for any $\e>0$.
        Thus we have a stripe-shaped forbidden area of the appearance of points in $D$
         (in other words, we can take $R=0$).
         However, the width of each bands may be small.
        In Theorem \ref{thm_nonlinearity}, we can choose the width and interval of bands
        arbitrarily up to small error.
\end{rem}	

           We now prove that the first condition in Theorem \ref{thm_nonlinearity}
	   implies the second. That the second implies the first under the assumption
	   of repetitivity is proved later. We first prove the following lemmas.
         \begin{lem}\label{lem1_span_of_small_eigenvalues}
	     Let $G$ be a (not necessarily closed) subgroup of $\Rd$.
	     Set
	    \begin{align}
	         V=\bigcap_{r>0}\overline{\spa_{\mathbb{Z}}G\cap B(0,r)}.
	     \label{eq_def_of_V}
	    \end{align}
	  Then $V$ is a vector subspace of $\Rd$.
	 \end{lem}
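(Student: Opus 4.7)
The plan is to invoke the structure theorem for closed subgroups of $\mathbb{R}^d$. For each $r>0$ set $\bar H_r:=\overline{\spa_{\mathbb{Z}}(G\cap B(0,r))}$, so that $V=\bigcap_{r>0}\bar H_r$. Because $\spa_{\mathbb{Z}}(G\cap B(0,r))$ is an additive subgroup of $\mathbb{R}^d$ and the closure of a subgroup of a topological group is again a subgroup, $\bar H_r$ is a closed subgroup of $\mathbb{R}^d$. By the structure theorem it decomposes as $\bar H_r=W_r\oplus\Lambda_r$, where $W_r$ is the identity component (a vector subspace of $\mathbb{R}^d$) and $\Lambda_r$ is a discrete complementary subgroup. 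Since $W_r$ is closed, the quotient $\bar H_r/W_r$ is a discrete group; let $\delta_r\in(0,\infty]$ denote the minimum distance from a point of $\bar H_r\setminus W_r$ to $W_r$ (equivalently, the minimum nonzero quotient norm on $\bar H_r/W_r$).

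The heart of the argument is the claim that $V\subset W_r$ for each fixed $r>0$. Pick $r'\in(0,\delta_r)$. For every $g\in G\cap B(0,r')$ one has $g\in\bar H_r$ and $\|g\|<\delta_r$, so the distance from $g$ to $W_r$ is at most $\|g\|<\delta_r$; by the very definition of $\delta_r$, this forces $g\in W_r$. Hence $G\cap B(0,r')\subset W_r$, and since $W_r$ is a closed vector subspace it absorbs $\mathbb{Z}$-spans and closures, giving $\bar H_{r'}\subset W_r$. Because $V\subset\bar H_{r'}$ by definition, we conclude $V\subset W_r$.

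Combining $V\subset W_r$ with the trivial inclusion $W_r\subset\bar H_r$ and intersecting over all $r>0$ yields $V=\bigcap_{r>0}W_r$, an intersection of vector subspaces of $\mathbb{R}^d$ and therefore itself a vector subspace. The main subtlety is the key claim above: once $r'$ is chosen below the gap $\delta_r$, every small element of $G$ is pinned to the identity component $W_r$. This relies only on the discreteness of $\bar H_r/W_r$ supplied by the structure theorem and on the inequality between the quotient norm and the ambient Euclidean norm; the rest of the proof is bookkeeping.
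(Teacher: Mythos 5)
Your proof is correct, but it takes a genuinely different route from the paper's. The paper verifies the subspace axioms for $V$ directly and elementarily: $0\in V$; closure under addition, by approximating $x$ and $y$ separately by elements of $\spa_{\mathbb{Z}}(G\cap B(0,r))$; closure under division by $n\in\Zpo$, by expanding $x$ over a maximal linearly independent subset $\mathcal{B}\subset G\cap B(0,1/m)$ and rounding the real coefficients to integer multiples of $n$ at a cost of at most $nd/m$; and finally passage from rational to real scalars using the closedness of $V$. You instead invoke the structure theorem for closed subgroups of $\Rd$ and establish the stronger identity $V=\bigcap_{r>0}W_r$ with $W_r$ the identity component of $\overline{H}_r:=\overline{\spa_{\mathbb{Z}}(G\cap B(0,r))}$; this is shorter and more conceptual, and it yields extra information about $V$, at the price of the standard but not entirely free fact that $\delta_r>0$, i.e.\ that the image of $\overline{H}_r$ in $\Rd/W_r$ is discrete --- this reduces to observing that the orthogonal projection onto $W_r^{\perp}$ is injective, hence a homeomorphism onto its image, on $\spa_{\mathbb{R}}\Lambda_r$. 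One small adjustment: you should choose $r'\in\bigl(0,\min\{r,\delta_r\}\bigr)$ rather than merely $r'<\delta_r$, since the inclusion $G\cap B(0,r')\subset\overline{H}_r$ that you use requires $r'\leqq r$; with that, the argument is complete.
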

	 \begin{proof}
	     First, $0\in V$.

	    Second, if $x,y\in V$, then $x+y\in V$.
	   Indeed, for any $r>0$ and $\e>0$ there are
	   $x',y'\in\spa_{\mathbb{Z}}G\cap B(0,r)$ such that
	  $\|x-x'\|<\e$ and $\|y-y'\|<\e$.
	  Since $x'+y'\in\spa_{\mathbb{Z}}G\cap B(0,r)$ and
	  $\|x+y-(x'+y')\|<2\e$, we see $x+y\in\overline{\spa_{\mathbb{Z}}G\cap B(0,r)}$.

	  Third, we show that if $x\in V$ and $n\in\Zpo$, then
	  $\frac{1}{n}x\in V$.
	  For any $m\in\Zpo$, let $\mathcal{B}$ be a maximal linear independent subset
	  of $G\cap B(0,1/m)$. We may take $\lambda_b\in\mathbb{R}$ for each
	  $b\in\mathcal{B}$ such that $x=\sum_{b\in\mathcal{B}}\lambda_{b}b$,
	  since $x\in\overline{\spa_{\mathbb{Z}}G\cap B(0,1/m)}\subset
	  \spa_{\mathbb{R}}\mathcal{B}$.
	  We may take $l_b\in\mathbb{Z}$ for each $b\in\mathcal{B}$ such that
	  $|\lambda_b-nl_b|<n$ for each $b$. Set $x_m=\sum_{b\in\mathcal{B}}l_bb$.
	  We have
	  \begin{align*}
	      \|x-nx_m\|=&
	      \|\sum_{b\in\mathcal{B}}\lambda_bb-\sum_{b\in\mathcal{B}}nl_bb\|\\
	           \leqq&\sum|\lambda_b-nl_b|\|b\|\\
	           \leqq&\frac{nd}{m},
	  \end{align*}
	  and so
	  \begin{align*}
	   \|\frac{1}{n}x-x_m\|\leqq\frac{d}{m}.
	  \end{align*}
	  For any $r>0$ and $\varepsilon>0$, if $m$ is large enough,
	  $x_m\in\spa_{\mathbb{Z}}G\cap B(0,r)$ and $\|\frac{1}{n}x-x_m\|<\varepsilon$.
	  This shows $\frac{1}{n}x\in V$.

	  Finally, by the second and the third part of this proof, if $\lambda$ is an
	  rational number, then $\lambda x\in V$. Since $V$ is closed,
	  this holds even if $\lambda$ is irrational.
	 \end{proof}

\begin{lem}\label{lem2_span_of_small_eigenvalues}
       Let $G$ be a subgroup of $\Rd$ and define $V$ by \eqref{eq_def_of_V}.
        If $0\in\Rd$ is a limit point of $G$, then the dimension of $V$ is more than $0$.
\end{lem}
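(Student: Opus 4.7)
The plan is to exploit the fact that $V$ is already a vector subspace of $\Rd$ by Lemma \ref{lem1_span_of_small_eigenvalues}, so that producing a single nonzero element of $V$ already forces $\dim V\geqq 1$. The natural candidate is a subsequential limit of the \emph{directions} of a sequence of small nonzero elements of $G$.

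Using the hypothesis that $0$ is a limit point of $G$, I would first pick $g_n\in G\setminus\{0\}$ with $g_n\to 0$. By compactness of the unit sphere of $\Rd$, passing to a subsequence I may assume that $u_n:=g_n/\|g_n\|$ converges to some $v$ with $\|v\|=1$. It then remains to show that this $v$ lies in $V$, i.e.\ in $\overline{\spa_{\mathbb{Z}}G\cap B(0,r)}$ for every $r>0$.

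The key rescaling is this: fix $r>0$. For all $n$ sufficiently large $\|g_n\|<r$, so $g_n\in G\cap B(0,r)$, and therefore every integer multiple $mg_n$ belongs to $\spa_{\mathbb{Z}}G\cap B(0,r)$ regardless of how large $\|mg_n\|$ is. Setting $m_n=\lfloor 1/\|g_n\|\rfloor\in\Zpo$, one has $m_n\|g_n\|\in[1-\|g_n\|,1]$ and hence $m_n\|g_n\|\to 1$. Combined with $u_n\to v$, this yields
\[
m_n g_n=(m_n\|g_n\|)\,u_n\longrightarrow v,
\]
placing $v$ in $\overline{\spa_{\mathbb{Z}}G\cap B(0,r)}$. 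Since $r>0$ was arbitrary, $v\in V$, and $\|v\|=1$ then gives the conclusion.

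The only subtlety worth flagging—rather than a real obstacle—is the need to parse $\spa_{\mathbb{Z}}G\cap B(0,r)$ as the integer span of the small elements $G\cap B(0,r)$, which is typically an unbounded subset of $\Rd$; the unit vector $v$ can then plausibly lie in its closure even when $r<1$. Under the opposite parsing (intersecting the span with the ball afterwards), no vector of norm $1$ could belong to $V$ and the statement of the lemma would fail, so this reading is essential and is also what Lemma \ref{lem1_span_of_small_eigenvalues} implicitly relied on.
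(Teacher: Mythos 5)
Your proof is correct and follows essentially the same route as the paper's: the paper also takes small nonzero $x_n\in G$, chooses integer multiples $k_nx_n$ with $1/2<\|k_nx_n\|<3/2$ (your $m_n=\lfloor 1/\|g_n\|\rfloor$ plays the same role), notes these lie in $\spa_{\mathbb{Z}}(G\cap B(0,r))$ for large $n$, and extracts a nonzero limit point in $V$. Your remark about the parsing of $\spa_{\mathbb{Z}}G\cap B(0,r)$ is also the reading the paper's own arguments require.
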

\begin{proof}
       For each integer $n>0$ there is $x_n\in G$ such that $0<\|x_n\|<1/n$.
       We may find $k_n\in\mathbb{Z}$ such that $1/2<\|k_nx_n\|<3/2$.
      The sequence $(k_nx_n)$ admits a limit point $x$. Then $x\neq 0$. Moreover,
      $x\in \overline{\spa_{\mathbb{Z}}G\cap B(0,r)}$ for each $r>0$ since
    $k_nx_n\in\spa_{\mathbb{Z}}G\cap B(0,r)$ for large $n$, and so
      $x\in V$.
\end{proof}
	 
       \begin{lem}[\cite{kellendonk2013meyer}, Lemma 4.1]\label{lemma_Kellendonk_weak_equiv}
	     Let $D$ be an FLC
	Delone set in $\Rd$ and $\chi$ a continuous character of $\Rd$.
	     Then $\chi$ is an eigenvalue for the topological dynamical system
	    $(X_D,\Rd)$ if and only if $\chi$ is a weakly $D$-equivariant function, that
	is, for any $\e>0$ there is $R>0$ such that
	\begin{align*}
	      \rho_{\mathbb{T}}(\chi(x),\chi(y))<\e
	\end{align*}
	for any $x,y\in\Rd$ with
	\begin{align*}
	     (D-x)\cap B(0,R)=(D-y)\cap B(0,R).
	\end{align*}
       \end{lem}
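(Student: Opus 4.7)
My plan is to prove both directions of the equivalence separately. The forward direction is a compactness and uniform continuity argument on the hull $X_D$; the converse constructs a candidate eigenfunction on the orbit of $D$ and extends it continuously to $X_D$, with FLC playing the essential role in the extension step. For the forward implication, let $f\colon X_D\to\mathbb{C}$ be a continuous eigenfunction with eigenvalue $\chi$. The identity $|f(D-x)|=|\chi(x)||f(D)|=|f(D)|$ together with density of the orbit shows that $|f|$ is a non-zero constant on $X_D$, so we may normalise $f$ to be $\mathbb{T}$-valued. Uniform continuity on the compact space $X_D$ gives, for each $\e>0$, a $\delta>0$ with $\rho(P,P')<\delta\Rightarrow |f(P)-f(P')|<\e$. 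Whenever $(D-x)\cap B(0,R)=(D-y)\cap B(0,R)$ with $R$ large enough, the definition of the local matching metric (with both shifts taken to be $0$) yields $\rho(D-x,D-y)\leq 1/R<\delta$, hence $|\chi(x)-\chi(y)|=|f(D-x)-f(D-y)|<\e$, which controls $\rho_{\mathbb{T}}(\chi(x),\chi(y))$ up to a universal constant.

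For the converse, define $f$ on the orbit of $D$ by $f(D-x):=\chi(x)$. Well-definedness is immediate from weak equivariance: if $D-x=D-y$, then $(D-x)\cap B(0,R)=(D-y)\cap B(0,R)$ holds for every $R$, forcing $\rho_{\mathbb{T}}(\chi(x),\chi(y))<\e$ for every $\e$ and hence $\chi(x)=\chi(y)$. The substantive task is to extend $f$ continuously to $X_D$, which reduces to showing that whenever $D-x_n\to D'$ in $X_D$, the sequence $(\chi(x_n))\subset\mathbb{T}$ is Cauchy. Using FLC together with the definition of the local matching topology, for $n,m$ large one finds small vectors $\tau_{n,m}$ with $\|\tau_{n,m}\|\to 0$ and radii $R_{n,m}\to\infty$ such that $(D-x_n)\cap B(0,R_{n,m})=(D-(x_m+\tau_{n,m}))\cap B(0,R_{n,m})$. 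Weak equivariance then gives $\rho_{\mathbb{T}}(\chi(x_n),\chi(x_m+\tau_{n,m}))\to 0$, and continuity of $\chi$ gives $\chi(x_m+\tau_{n,m})\to\chi(x_m)$, yielding the Cauchy property. Setting $f(D'):=\lim\chi(x_n)$ then produces a continuous $\mathbb{T}$-valued function on $X_D$. The relation $f(P-t)=\chi(t)f(P)$ holds on the dense orbit by construction and hence on all of $X_D$ by continuity, so $\chi$ is a topological eigenvalue.

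The main obstacle is precisely the Cauchy-sequence step in the converse: one must upgrade abstract closeness of two patches $D-x_n$ and $D-x_m$ in the local matching topology to the existence of a small translation $\tau_{n,m}$ which makes them agree on a large common ball. This is where FLC is indispensable, since it guarantees that the finitely many $R$-patches form a discrete set modulo translation, so two patches that are close in the metric must already be exact translates of each other on some large ball up to a small shift. Without FLC, close points in the hull need not agree on large balls after small translation, and continuity of the extended $f$ fails. All other steps of the proof are routine bookkeeping around this reduction.
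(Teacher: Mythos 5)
The paper offers no proof of this lemma: it is imported verbatim from Kellendonk and Sadun (Lemma~4.1 of the cited work), so there is no internal argument to compare yours against. That said, your proof is correct and is essentially the standard argument for this equivalence. The forward direction (normalise a continuous eigenfunction to be $\mathbb{T}$-valued using constancy of its modulus on the closure of the orbit, then invoke uniform continuity on the compact hull, compactness being where FLC enters) is sound, as is the converse (define $f(D-x)=\chi(x)$ on the orbit, check well-definedness, establish uniform continuity of $f$ on the orbit with respect to the local matching metric, and extend to $X_D$). One correction of emphasis: you claim FLC is indispensable in the converse, namely to upgrade metric closeness of $D-x_n$ and $D-x_m$ to exact agreement on a large ball after a small shift. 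With the metric this paper actually uses, where $r\in\Delta(D_1,D_2)$ already demands the exact equality $(D_1+u)\cap B_{1/r}=(D_2+v)\cap B_{1/r}$ for some $u,v\in B_r$, this upgrade is automatic: $\rho(D-x_n,D-x_m)<r$ directly yields $\tau_{n,m}=u-v$ with $\|\tau_{n,m}\|\leqq 2r$ and exact agreement on $B(0,\tfrac{1}{r}-r)$, with no appeal to FLC. Your remark would be pertinent if the topology were defined by approximate (Hausdorff-type) matching of patches, which is the natural definition in the non-FLC setting; in the present setup FLC is genuinely used only for compactness of $X_D$ in the forward direction. The remaining points you defer (independence of the limit from the approximating sequence, and passing the eigenvalue relation from the dense orbit to all of $X_D$ by continuity) are indeed routine.
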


	 We now prove one way of implications in Theorem \ref{thm_nonlinearity}.
        \begin{prop}\label{prop_Delone_stripe_str}
	    Let $D$ be a Delone set of $\Rd$ which has FLC.
	     Suppose that $0$ is a limit point of the set of all topological eigenvalues
	    for $(X_D,\Rd)$.
	    Then for any $L_1,L_2>0$ and $\e>0$, there are $R_1,R_2>0$ such that
	     \begin{enumerate}
	      \item $|R_j-L_j|<\e$ for each $j=1,2$, and
	      \item $D$ has $(R_1,R_2)$-stripe structure.
	     \end{enumerate}
	\end{prop}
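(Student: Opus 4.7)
The first step is to identify the group $G$ of topological eigenvalues with a subgroup of $\mathbb{R}^d$ via Pontryagin duality, $\xi\leftrightarrow\chi_\xi$ with $\chi_\xi(x)=e^{2\pi i\langle x,\xi\rangle}$; under this identification, $0$ is a limit point of $G$. Lemmas \ref{lem1_span_of_small_eigenvalues} and \ref{lem2_span_of_small_eigenvalues} together supply a nontrivial vector subspace $V\subset\mathbb{R}^d$ consisting of ``limits of arbitrarily small integer combinations of eigenvalues.'' Let $a\in V$ be a unit vector; this $a$ will play the role of the unit direction in the stripe structure.

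Next, since $V$ is a vector space, $(1/L_1)a\in V$, so for any tolerance $\eta>0$ there exists $\xi\in\spa_{\mathbb{Z}}G=G$ (the last equality because $G$ is already a group) with $\|\xi-(1/L_1)a\|<\eta$. Writing $\xi=(1/R_1)b$ with $b=\xi/\|\xi\|$ and $R_1=1/\|\xi\|$, shrinking $\eta$ forces $R_1$ arbitrarily close to $L_1$ and $b$ arbitrarily close to $a$; in particular we can arrange $|R_1-L_1|<\varepsilon$.

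Now apply Lemma \ref{lemma_Kellendonk_weak_equiv} to the eigenvalue $\chi_\xi$ with the tolerance $\delta:=L_2/R_1$: there is $R>0$ such that whenever $(D-x)\cap B(0,R)=(D-y)\cap B(0,R)$, the quantity $\langle y-x,\xi\rangle$ lies within $\delta$ of $\mathbb{Z}$. Dividing through by $\|\xi\|=1/R_1$ rewrites this as $\langle y-x,b\rangle\in R_1\mathbb{Z}+(-L_2,L_2)$, i.e.\ $y\in S(b,x,R_1,L_2)$; setting $R_2:=L_2$ yields $(R_1,R_2)$-stripe structure with unit direction $b$, and both $|R_j-L_j|<\varepsilon$.

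The crux, once the structural Lemmas \ref{lem1_span_of_small_eigenvalues} and \ref{lem2_span_of_small_eigenvalues} are in hand, is that Lemma \ref{lemma_Kellendonk_weak_equiv} offers only one degree of freedom (the tolerance $\delta$) whereas the stripe structure requires two parameters. The reconciliation is the scaling identity $R_2=\delta R_1$: prescribing $R_1\approx L_1$ by selecting the eigenvalue $\xi$ near $(1/L_1)a$, and then choosing $\delta=L_2/R_1$, simultaneously pins down both parameters to the required values.
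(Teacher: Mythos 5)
Your proof is correct and follows essentially the same route as the paper: Lemmas \ref{lem1_span_of_small_eigenvalues} and \ref{lem2_span_of_small_eigenvalues} produce an eigenvalue $\xi$ with $1/\|\xi\|$ within $\e$ of $L_1$, and then Lemma \ref{lemma_Kellendonk_weak_equiv} with tolerance $L_2/R_1$ (the paper's $r$, chosen so that $r/\|\xi\|=L_2$) gives the stripe structure after rescaling by $\|\xi\|$. Your write-up is in fact slightly more explicit than the paper's about how the nontrivial subspace $V$ yields an eigenvalue of prescribed approximate length, but the argument is the same.
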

        \begin{proof}
              By Lemma \ref{lem1_span_of_small_eigenvalues} and Lemma
	 \ref{lem2_span_of_small_eigenvalues}, we can take an eigenvalue
	      $a$ such that $|\frac{1}{\|a\|}-L_1|<\e$. Take $r>0$ such that
	      $\frac{r}{\|a\|}=L_2$. We set $R_1=\frac{1}{\|a\|}$ and
	      $R_2=\frac{r}{\|a\|}$.

	      Since the character $\chi_a$ is weakly $D$-equivariant, there is
	       $R>0$ such that $x,y\in\Rd$ and
	       \begin{align}
		   (D-x)\cap B(0,R)=(D-y)\cap B(0,R)
		\label{eq1_many_eigen_stripe_structure}
	       \end{align}
	       imply
	      \begin{align*}
	           \rho_{\mathbb{T}}(\chi_a(x),\chi_a(y))\leqq r.
	      \end{align*}
	      We will show that this $R$ satisfies the condition in
	      Definition \ref{def_stripe-structure}.
	      Take $x\in\Rd$ and fix it. If $y\in\Rd$ and
	 (\ref{eq1_many_eigen_stripe_structure}) holds,
	      then we have
	       \begin{align*}
		    |\langle y-x,a\rangle-n|\leqq r.
	       \end{align*}
	        for some $n\in\mathbb{Z}$.
	       We obtain
	       \begin{align*}
		\langle y-x,a\rangle\in\mathbb{Z}+[-r,r],
	       \end{align*}
	       and so $y\in S(\frac{1}{\|a\|}a,x,R_1,R_2)$.
	      We have proved
	      \begin{align*}
	           \{y\in\Rd\mid (D-y)\cap B(0,R)=(D-x)\cap B(0,R)\}
	      \end{align*}
	      is contained in $S(\frac{1}{\|a\|}a,x,R_1,R_2)$, and so
	       $D$ has $(R_1,R_2)$-stripe structure.
	\end{proof}

	In what follows we prove the remaining part
	of Theorem \ref{thm_nonlinearity}
under the assumption of repetitivity.

\begin{lem}\label{lem1_for_converse_stripe_str}
     Let $D$ be a repetitive $(R,r)-$Delone set in $\Rd$.
     Take $x_0\in\Rd$ and $R_0>R$ arbitrarily.
     Set
      \begin{align*}
            E=\{x\in\Rd\mid (D-x_0)\cap B(0,R_0)=(D-x)\cap B(0,R_0)\}.
      \end{align*}
      Then $E$ is a Delone set and $D\LDRd E$.
\end{lem}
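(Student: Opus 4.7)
The plan is to interpret $E$ as the set of occurrences in $D$ of the finite patch $P := (D - x_0) \cap B(0, R_0)$ and to read off the three required properties from this description. By definition, $x \in E$ iff the portion of $D$ seen from $x$ within radius $R_0$ agrees, after the shift by $x - x_0$, with the one seen from $x_0$; equivalently, the elements of $E$ are precisely the translation vectors at which $P$ occurs in $D$.

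Relative density of $E$ then follows directly from the repetitivity of $D$ applied to the single patch $P$: there exists $M > 0$ such that every ball of radius $M$ in $\Rd$ meets $E$. Local derivability $D \LDRd E$ is also immediate with derivation radius $R_0$, since whether $x \in E$ is decided by $(D - x) \cap B(0, R_0)$ via comparison with the fixed patch $(D - x_0) \cap B(0, R_0)$.

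The step that requires a genuine argument is uniform discreteness, and this is where the hypothesis $R_0 > R$ is essential. Given distinct $x, y \in E$, the $R$-relative density of $D$ yields $p \in D$ with $p - x \in B(0, R) \subset B(0, R_0)$. The defining equality of $E$ then transfers this point: $p - x \in (D - x) \cap B(0, R_0) = (D - y) \cap B(0, R_0)$, so $q := p - x + y$ also lies in $D$. Since $p \neq q$ are two points of $D$ at distance $\|x - y\|$, the $r$-uniform discreteness of $D$ forces $\|x - y\| > r$, whence $E$ is $r$-uniformly discrete. I do not expect any substantially harder obstacle; the only mild subtlety is keeping the roles of $R$ and $R_0$ straight when shuttling points between the two shifted copies of $D$.
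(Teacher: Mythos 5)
Your proposal is correct and follows essentially the same route as the paper: uniform discreteness via transporting a point of $D$ found in the nonempty common patch (this is exactly where $R_0>R$ enters), relative density from repetitivity applied to the patch $(D-x_0)\cap B(0,R_0)$, and local derivability with derivation radius $R_0$ because membership of $z$ in $E$ is determined by $(D-z)\cap B(0,R_0)$. The only difference is presentational: the paper writes out the chain of set equalities verifying $z+y-x\in E$ for the local derivability step, which you assert as immediate, but your stated justification is precisely that computation.
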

\begin{proof}
      Take $x$ and $y$ from $E$.
      Since $R_0$ is greater than $R$, the set
      \begin{align*}
           (D-x)\cap B(0,R_0)=(D-y)\cap B(0,R_0)
      \end{align*}
      is not empty. Take $z$ from this set.
      We see $x+z,y+z\in D$ and $\|x-y\|=\|x+z-(y+z)\|$ is either $0$ or
      greater than $r$. This shows $E$ is uniformly discrete with respect to $r$.

      Next, since $D$ is repetitive, there is $R>0$ such that
       for any $x\in\Rd$ there is $y\in\Rd$ with
       \begin{enumerate}
	\item $\|x-(x_0+y)\|<R$, and
	      \item $D\cap B(x_0+y,R_0)=(D\cap B(x_0,R_0))+y$.
       \end{enumerate}
       Let $x\in\Rd$ be an arbitrary element and $y\in\Rd$ satisfy the above two
       conditions. Then
      \begin{align*}
            (D-(x_0+y))\cap B(0,R_0)=(D-x_0)\cap B(0,R_0),
      \end{align*}
      and so $x_0+y\in E$. We have shown that $E\cap B(x,R)\neq\emptyset$ and
      $E$ is relatively dense with respect to $R$.

      Finally, we show that $D\LDRd E$.
      Take $x,y\in\Rd$ and $L>0$ arbitrarily and assume
      \begin{align*}
            (D-x)\cap B(0,R_0+L)=(D-y)\cap B(0,R_0+L).
      \end{align*}
      To prove
      \begin{align}
            (E-x)\cap B(0,L)=(E-y)\cap B(0,L)\label{eq_goal_D_LDRd_E},
      \end{align}
       we take $z\in E$ such that $z-x\in B(0,L)$. Then
       \begin{align*}
	    (D-x_0)\cap B(0,R_0)=&(D-z)\cap B(0.R_0)\\
	                       =&((D-x)\cap B(0,R_0+L)\cap B(z-x,R_0))+x-z\\
	                       =&((D-y)\cap B(0,R_0+L)\cap B(z-x,R_0))+x-z\\
	                       =&(D+x-y-z)\cap B(0,R_0).
       \end{align*}
        This implies that $z+y-x\in E$ and so $z-x\in E-y$.
        We have shown
        \begin{align*}
	  (E-x)\cap B(0,L)\subset (E-y)\cap B(0,L),
	\end{align*}
        and since the proof for the reverse inclusion is the same, we have
       \eqref{eq_goal_D_LDRd_E}.
\end{proof}

The following lemma is essentially \cite[Proposition 4.3]{kellendonk2013meyer}, where
the authors use the theory of pattern-equivariant cohomology. We give a proof without
using this theory.

\begin{lem}\label{lem2_stripe_str_converse}
       Let $D$ be an $(R,r)$-Delone set in $\Rd$ and assume $D$ is repetitive.
       Let $f\colon\Rd\rightarrow\mathbb{R}$ be a (not necessarily continuous)
        bounded function such that $a,b,c,d\in D$ and $a-b=c-d$ imply
       $f(a)-f(b)=f(c)-f(d)$. Then for any $\e>0$ there is a Delone $D_{\e}$ in $\Rd$
      such that
      \begin{enumerate}
        \item $D\supset D_{\e}$,
       \item $D\LDRd D_{\e}$, and
	    \item if $a,b\in D_{\e}$, then $|f(a)-f(b)|<\e$.
      \end{enumerate}
\end{lem}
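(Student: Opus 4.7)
The plan is to construct $D_\e$ as a subset of $D$ selected by the local $R$-pattern of $D$ around each point, for $R$ and the family of admissible patterns chosen so that $f$ is almost constant on the selected set.

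First, I would introduce the cocycle $g \colon D - D \to \mathbb{R}$ by $g(a - b) := f(a) - f(b)$. By hypothesis this is well-defined, and the identity $f(a) - f(c) = (f(a) - f(b)) + (f(b) - f(c))$ gives additivity $g(u + v) = g(u) + g(v)$ whenever $u, v, u+v \in D - D$; moreover $|g| \leq 2M$ with $M := \sup_{a \in D}|f(a)|$. Condition (3) of the lemma then becomes the statement $|g(a-b)| < \e$ for all $a, b \in D_\e$.

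For each $R > 0$ and each $R$-pattern $P$ occurring in $D$, set
\[
D_{P,R} := \{a \in D : (D - a) \cap B(0, R) = P\}.
\]
By repetitivity, $D_{P,R}$ is relatively dense; as a subset of $D$ it inherits uniform discreteness, hence is Delone; and since membership is decided by the $R$-pattern of $D$ around the candidate, $D \LDRd D_{P,R}$ follows by essentially the argument used for $E$ in Lemma~\ref{lem1_for_converse_stripe_str}. So for any family $\mathcal{P}$ of $R$-patterns, $D_\e := \bigcup_{P \in \mathcal{P}} D_{P,R}$ automatically satisfies conditions (1) and (2).

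The task reduces to choosing $R$ and $\mathcal{P}$ so that $f|_{D_\e}$ has oscillation less than $\e$. I would pigeonhole the compact interval $[-M, M]$ containing $f(D)$ into finitely many slots of length $\e/2$, and pick a slot $I$ whose preimage $\{a \in D : f(a) \in I\}$ is relatively dense (which exists by repetitivity and an averaging argument on the continuous hull $X_D$). Then set $\mathcal{P}(R) := \{P : D_{P,R} \subset f^{-1}(I)\}$ and $D_\e := \bigcup_{P \in \mathcal{P}(R)} D_{P,R}$; all $f$-values on $D_\e$ then lie in $I$, which gives condition (3). The main obstacle is showing that $\mathcal{P}(R)$ is nonempty and $D_\e$ is relatively dense for $R$ large enough---equivalently, that the $R$-pattern of $D$ around $a \in D$ eventually determines $f(a)$ to within $\e/2$. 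This is the elementary substitute for the pattern-equivariant cohomology argument in Kellendonk--Sadun, and I expect it to use repetitivity essentially: combining the additivity and boundedness of $g$ with a compactness argument on $X_D$, one wants to show that the diameter of $f|_{D_{P,R}}$ shrinks to $0$ as $R \to \infty$ for every accumulating pattern $P$.
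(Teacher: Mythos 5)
Your setup (the cocycle $g$, the pattern-indexed sets $D_{P,R}$, and the observation that conditions (1) and (2) follow from the argument of Lemma~\ref{lem1_for_converse_stripe_str}) is sound, but the proof has a genuine gap exactly where you flag ``the main obstacle'': you never prove that the $R$-pattern around a point eventually determines $f$ to within $\e/2$. That claim \emph{is} the entire content of the lemma --- everything else is bookkeeping --- and a ``compactness argument on $X_D$'' will not produce it, because $f$ is not assumed continuous, not assumed pattern-equivariant, and does not descend to anything on the hull; there is no a priori relation between the value $f(a)$ and the pattern of $D$ near $a$ except through the cocycle identity. The mechanism that actually closes this gap in the paper is an extremal argument: normalize so that $M=\sup f=-\inf f$, pick a near-maximizer $a_0$ and a near-minimizer $b_0$ with $f(a_0)>M-\e/2$ and $f(b_0)<-M+\e/2$, and choose the window $B(c_0,R_0)$ large enough to contain \emph{both}. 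If $a$ is another occurrence of that window's pattern, then $a_0-c_0+a$ and $b_0-c_0+a$ lie in $D$, and the cocycle identity gives $f(a_0-c_0+a)-f(b_0-c_0+a)=f(a_0)-f(b_0)>2M-\e$; since $f$ is bounded by $M$, one of the two values is forced to within $\e/2$ of the corresponding extremum, and the cocycle identity then pins $f(a)-f(c_0)$ into $(-\e/2,\e/2)$. Without this (or an equivalent) use of the boundedness of $f$ against its own supremum, your ``diameter of $f|_{D_{P,R}}$ shrinks to $0$'' remains an unproved assertion.

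A secondary problem: your pigeonhole step asserts that some slot $I$ of length $\e/2$ has relatively dense preimage $f^{-1}(I)\cap D$. A Delone set can be a finite union of subsets none of which is relatively dense, so this needs an actual argument (repetitivity of $D$ alone does not give it, since the slots are defined by $f$-values, not by patterns). The paper's route avoids this entirely: it needs only \emph{one} near-maximizer and \emph{one} near-minimizer, places them in a single window around a single point $c_0$, and gets relative density of $D_\e$ for free from repetitivity of $D$ applied to that one pattern. I would recommend abandoning the slot decomposition and carrying out the extremal argument directly.
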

\begin{proof}
    We may replace $f$ with $f+C$ for some constant $C\in\mathbb{R}$ so that we may
   assume
    \begin{align*}
         M=\sup_{a\in D} f(a)=-\inf_{a\in D} f(a).
    \end{align*}
    For any $\e>0$ there are $a_0$ and $b_0$ in $D$ such that
    $f(a_0)>M-\e/2$ and $f(b_0)<-M+\e/2$.
    Take $c_0\in D$ and fix it. If $R_0>R$ is sufficiently large, we have
    $a_0,b_0\in B(c_0,R_0)$. Set
   \begin{align*}
        D_{\e}=\{x\in\Rd\mid (D-c_0)\cap B(0,R_0)=(D-x)\cap B(0,R_0)\}.
   \end{align*}
   Then $D_{\e}\subset D$ and by Lemma \ref{lem1_for_converse_stripe_str},
   $D_{\e}$ is Delone and $D\LDRd D_{\e}$.

   Next, take $a\in D_{\e}$ arbitrarily and we show $|f(a)-f(c_0)|<\e/2$.
   Since
   \begin{align*}
          (D\cap B(c_0,R_0))-c_0=(D-a)\cap B(0,R_0),
   \end{align*}
    by definition of $R_0$ we see $a_0-c_0+a\in D$ and $b_0-c_0+a\in D$.
    Since
    \begin{align*}
           f(a_0-c_0+a)-f(b_0-c_0+a)=f(a_0)-f(b_0)>2M-\e,
    \end{align*}
     we have either $f(a_0-c_0+a)>M-\e/2$ or $f(b_0-c_0+a)<-M+\e/2$. In the former
     case, we see
    \begin{align*}
          f(a)-f(c_0)=&f(a+a_0-c_0)-f(a_0)+f(a)-f(a+a_0-c_0)-f(c_0)+f(a_0)\\
                     =&f(a+a_0-c_0)-f(a_0)\\
                     \in& (-\e/2,\e/2),
    \end{align*}
    since
     \begin{align*}
          f(a)-f(a+a_0-c_0)=f(c_0)-f(a_0)
     \end{align*}
     by the assumption on $f$.
     Similarly in the latter case
     \begin{align*}
           f(a)-f(c_0)\in (\e/2,\e/2).
     \end{align*}

    Finally, if $a,b\in D_{\e}$, then by the previous paragraph
     \begin{align*}
           |f(a)-f(b)|\leqq |f(a)-f(c_0)|+|f(b)-f(c_0)|<\e,
     \end{align*}
     which completes the proof.
\end{proof}

The following lemma can be found in \cite[p.123]{kellendonk2013meyer}, but we do not
omit the proof which does not use the theory of cohomology, for the reader's convenience.
\begin{lem}\label{lem2_converse_stripe_str}
      Let $D$ be a repetitive Delone set in $\Rd$.
      Let $a_0$ and $b_0$ be elements of $\Rd$. Assume if $x\in D$ we have
      \begin{align*}
              |\langle x-b_0,a_0\rangle- n|<1/4
      \end{align*}
       for some $n\in\mathbb{Z}$. Then the character
       $\chi_{a_0}\colon\Rd\ni x\mapsto e^{2\pi i\langle x,a_0\rangle}\in\mathbb{T}$,
       is weakly $D$-equivariant.
\end{lem}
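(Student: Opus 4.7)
The plan is to reduce the statement to Lemma \ref{lem2_stripe_str_converse} by encoding the fractional part of $\langle\,\cdot-b_0,a_0\rangle$ as a bounded function on $D$. Concretely, for each $x\in D$ the hypothesis gives a unique integer $n(x)\in\mathbb{Z}$ with $|\langle x-b_0,a_0\rangle-n(x)|<1/4$ (uniqueness comes from $1/4<1/2$), so I would define
\begin{align*}
    f(x)=\langle x-b_0,a_0\rangle-n(x)\in(-1/4,1/4)
\end{align*}
for $x\in D$ and extend $f$ by $0$ outside $D$. Then $f$ is bounded.

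Next I would verify the cocycle-type compatibility needed to apply Lemma \ref{lem2_stripe_str_converse}. Suppose $a,b,c,d\in D$ satisfy $a-b=c-d$. Pairing with $a_0$ gives
\begin{align*}
    (f(a)-f(b))-(f(c)-f(d))=(n(c)-n(d))-(n(a)-n(b))\in\mathbb{Z}.
\end{align*}
On the other hand, both $f(a)-f(b)$ and $f(c)-f(d)$ lie in $(-1/2,1/2)$, so their difference lies in $(-1,1)$. The only integer in $(-1,1)$ is $0$, so $f(a)-f(b)=f(c)-f(d)$, as required.

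Given $\e>0$, I would apply Lemma \ref{lem2_stripe_str_converse} with error $\e$ to obtain a Delone set $D_\e\subset D$ with $D\LDRd D_\e$ and $|f(a)-f(b)|<\e$ for all $a,b\in D_\e$. Fix $L>0$ larger than a relative-density radius for $D_\e$. By $D\LDRd D_\e$ there exists $R>0$ such that
\begin{align*}
    (D-x)\cap B(0,R)=(D-y)\cap B(0,R)\implies (D_\e-x)\cap B(0,L)=(D_\e-y)\cap B(0,L).
\end{align*}
Since $D_\e$ is $L$-relatively dense, the common set $(D_\e-x)\cap B(0,L)=(D_\e-y)\cap B(0,L)$ is nonempty; pick a common element $z$. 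Then $z+x,z+y\in D_\e$, so $|f(z+x)-f(z+y)|<\e$. Subtracting the defining equations for $f$ at $z+x$ and $z+y$ gives
\begin{align*}
    \langle x-y,a_0\rangle=\bigl(f(z+x)-f(z+y)\bigr)+\bigl(n(z+x)-n(z+y)\bigr)\in\mathbb{Z}+(-\e,\e),
\end{align*}
so $\rho_{\mathbb{T}}(\chi_{a_0}(x),\chi_{a_0}(y))<\e$. This is exactly weak $D$-equivariance of $\chi_{a_0}$.

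The only genuinely delicate point is the well-definedness of $f$ and the verification of the cocycle identity; both are driven by the strict inequality $1/4<1/2$ in the hypothesis, which ensures that integer ambiguities are eliminated. Everything else is a direct assembly of Lemmas \ref{lem1_for_converse_stripe_str} and \ref{lem2_stripe_str_converse} together with the relative density of $D_\e$.
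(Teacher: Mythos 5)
Your proposal is correct and follows essentially the same route as the paper: the paper's proof introduces $\theta(x)\in[-\pi,\pi)$ with $2\pi\langle x-b_0,a_0\rangle=\theta(x)+2n\pi$, which is just $2\pi f$, verifies the same additivity identity using the $1/4$-bound (via equality of exponentials rather than your integer-gap argument), and then applies Lemma \ref{lem2_stripe_str_converse} together with relative density of $D_{\e}$ and local derivability exactly as you do. Your final estimate, landing directly in $\mathbb{Z}+(-\e,\e)$, is if anything slightly cleaner than the paper's passage through $|e^{i\theta(z)}-e^{i\theta(z-x+y)}|$.
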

\begin{proof}
      We may take $\theta\colon\Rd\rightarrow [-\pi,\pi)$ such that for any
     $x\in\Rd$
       \begin{align*}
	    2\pi\langle x-b_0,a_0\rangle=\theta(x)+2n\pi
       \end{align*}
       for some $n\in\mathbb{Z}$. For any $a,b,c,d\in D$ such that $a-b=c-d$, we have
       \begin{align*}
	    e^{i(\theta(a)-\theta(b))}=&e^{2\pi i(\langle a-b_0,a_0\rangle-
	                                      2\pi\langle b-b_0,a_0\rangle)}\\
	                            =&e^{2\pi i\langle a-b,a_0\rangle}\\
	                            =&e^{2\pi i\langle c-d,a_0\rangle}\\
	                            =&e^{i\theta(c)-\theta(d)}.
       \end{align*}
       Since $\theta(a),\theta(b),\theta(c)$ and $\theta(d)$ are in $(-\pi/2,\pi/2)$
       by the assumption, we see $\theta(a)-\theta(b)=\theta(c)-\theta(d)$.
       By Lemma \ref{lem2_stripe_str_converse}, for each $\e>0$ there is a Delone
       $D_{\e}\subset D$ such that $D\LDRd D_{\e}$ and $|\theta(a)-\theta(b)|<\e$ for
       any $a,b\in D_{\e}$.
       Let $R_1$ be a constant for the local derivability $D\LDRd D_{\e}$ and
        $R_2>0$ be such that $D_{\e}$ is relatively dense with respect to $R_2$.
        If $x,y\in\Rd$ and
       \begin{align}
	      (D-x)\cap B(0,R_1+R_2)=(D-y)\cap B(0,R_1+R_2)
	\label{eq_lem3_converse_stripe_str}
       \end{align}
        then
       \begin{align*}
	      (D_{\e}-x)\cap B(0,R_2)=(D_{\e}-y)\cap B(0,R_2).
       \end{align*}
        Take $z\in D_{\e}$ such that $z-x\in B(0,R_2)$.
        Then $z-x+y\in D_{\e}$ and
        $|\theta(z)-\theta(z-x+y)|<\e$.
        Using
         \begin{align*}
	         |e^{2\pi i\langle x,a_0\rangle}-e^{2\pi i\langle y, a_0\rangle}|
	          =&|e^{2\pi i\langle z-b_0, a_0\rangle}-e^{2\pi i\langle z-x+y-b_0,a_0\rangle}|\\
	          =&|e^{i\theta(z)}-e^{i\theta (z-x+y)}|,
	 \end{align*}
         we see that for any $\eta>0$, if $\e>0$ is small enough,
          the equation \eqref{eq_lem3_converse_stripe_str} implies that
          \begin{align*}
	      \rho_{\mathbb{T}}(\chi_{a_0}(x),\chi_{a_0}(y))<\eta.
	  \end{align*}
\end{proof}

\begin{prop}\label{lem4_converse_stripe_str}
       Let $D$ be an FLC and repetitive Delone set in $\Rd$.
       Suppose for any $R_1,R_2>0$ and $\e>0$, there are $L_1,L_2>0$ such that
       \begin{enumerate}
	\item $|L_j-R_j|<\e$ for each $j=1,2$, and
	      \item $D$ has $(L_1,L_2)$-stripe structure.
       \end{enumerate}
       Then $0$ is a limit point of the group of all topological eigenvalues
       for $(X_D,\Rd)$.
\end{prop}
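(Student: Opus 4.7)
The plan is to produce, for each $\delta>0$, a nonzero topological eigenvalue $a_0$ of $(X_D,\Rd)$ with $\|a_0\|<\delta$; since the set of topological eigenvalues is a group, this is exactly what it means for $0$ to be a limit point. The strategy is to combine the stripe hypothesis with Lemmas~\ref{lem1_for_converse_stripe_str}, \ref{lem2_converse_stripe_str} and \ref{lemma_Kellendonk_weak_equiv}, but applied not to $D$ itself but to a suitable sub-Delone set $E\subset D$ cut out by a single $R$-patch.

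First I would apply the hypothesis with $R_1$ large (so that $1/R_1<\delta$), $R_2$ small enough that $R_2/R_1<1/8$, and $\e$ small enough to ensure $1/L_1<\delta$ and $L_2/L_1<1/4$. This yields a unit vector $a\in\Rd$ and a radius $R>0$ witnessing $(L_1,L_2)$-stripe structure. Fix $x_0\in D$ and enlarge $R$, if necessary, past the relative-density constant of $D$, so that Lemma~\ref{lem1_for_converse_stripe_str} applies, and set
\begin{align*}
E=\{y\in\Rd : (D-x_0)\cap B(0,R)=(D-y)\cap B(0,R)\}.
\end{align*}
By Lemma~\ref{lem1_for_converse_stripe_str}, $E$ is a Delone set and $D\LDRd E$, and the stripe-structure inclusion applied at $x=x_0$ gives $E\subset S(a,x_0,L_1,L_2)$.

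Next I would apply Lemma~\ref{lem2_converse_stripe_str} to $E$, with $a_0:=a/L_1$ and $b_0:=x_0$: for every $y\in E$,
\begin{align*}
\langle y-x_0,a_0\rangle=\tfrac{1}{L_1}\langle y-x_0,a\rangle\in\mathbb{Z}+[-L_2/L_1,L_2/L_1]\subset\mathbb{Z}+(-1/4,1/4),
\end{align*}
which is the required hypothesis. To invoke Lemma~\ref{lem2_converse_stripe_str} and then Lemma~\ref{lemma_Kellendonk_weak_equiv} I need $E$ to be FLC and repetitive. FLC is inherited from $D$: because membership in $E$ is determined by a single $R$-patch of $D$, any patch of $E$ of a prescribed size is determined, up to translation, by a patch of $D$ of a controlled radius, and there are only finitely many of the latter. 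Repetitivity of $E$ follows from FLC-plus-repetitivity of $D$, which gives minimality of $(X_D,\Rd)$; the local derivability $D\LDRd E$ induces a continuous $\Rd$-equivariant surjection $\pi\colon X_D\to X_E$, so $(X_E,\Rd)$ is also minimal and hence $E$ is repetitive. Thus Lemma~\ref{lem2_converse_stripe_str} yields weak $E$-equivariance of $\chi_{a_0}$, and Lemma~\ref{lemma_Kellendonk_weak_equiv} promotes this to the statement that $\chi_{a_0}$ is a topological eigenvalue of $(X_E,\Rd)$.

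Finally, to lift the eigenvalue back to $(X_D,\Rd)$ I would compose a continuous eigenfunction $f\colon X_E\to\mathbb{C}$ for $\chi_{a_0}$ with $\pi$, obtaining a continuous nonzero eigenfunction $f\circ\pi$ on $X_D$ for the same character; hence $a_0$ is a topological eigenvalue of $(X_D,\Rd)$. Since $\|a_0\|=1/L_1<\delta$ and $a_0\neq 0$, letting $R_1\to\infty$ (equivalently $\delta\to 0$) produces a sequence of nonzero eigenvalues tending to $0$, proving the claim. I expect the main obstacle to be the careful verification that the auxiliary Delone set $E$ inherits FLC and repetitivity from $D$ and that the local-derivability relation really does induce a topological factor map along which eigenvalues pull back, since the entire argument is driven through $E$ rather than through $D$ itself.
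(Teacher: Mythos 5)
Your proposal is correct and follows essentially the same route as the paper: choose $L_1$ large with $L_2/L_1<1/4$, cut out the Delone set $E$ of occurrences of a single large patch via Lemma~\ref{lem1_for_converse_stripe_str}, apply Lemma~\ref{lem2_converse_stripe_str} and Lemma~\ref{lemma_Kellendonk_weak_equiv} to $E$ to get the eigenvalue $a/L_1$ for $(X_E,\Rd)$, and pull it back along the factor map $X_D\to X_E$. The only difference is that you spell out the inheritance of FLC and repetitivity by $E$ and the pullback of eigenfunctions, which the paper leaves as brief remarks.
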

\begin{proof}
       For any $R_1,R_2,\e>0$ we take $L_1$ and $L_2$ as in the assumption.
       By the definition of stripe structure (Definition \ref{def_stripe-structure}),
      there are $a_0\in\Rd$ with $\|a_0\|=1$ and $R>0$ such that
      \begin{align*}
           E=\{y\in\Rd\mid (D-y)\cap B(0,R)=(D-x)\cap B(0,R)\}\subset S(a,x,L_1,L_2)
      \end{align*}
       for each $x\in\Rd$. Since we can take arbitrarily large $R$,
       by Lemma \ref{lem1_for_converse_stripe_str} $E$ is Delone and $D\LDRd E$.

       If $R_1>4R_2$ and $\e$ is small enough, then $L_1>4L_2$.
        Then if $y\in E$, we have
       \begin{align*}
	    \langle y-x,a_0\rangle\in L_1\mathbb{Z}+[-L_2,L_2],
       \end{align*}
       and so
       \begin{align*}
	     \langle y-x,\frac{1}{L_1}a_0\rangle \in \mathbb{Z}+(-1/4,1/4).
       \end{align*}
        The set $E$ is repetitive and has FLC since these properties are inherited under
       local derivability.
      Using Lemma \ref{lem2_converse_stripe_str}, we see $\chi_{(1/L_1)a_0}$ is
        weakly $E$-equivariant. By Lemma \ref{lemma_Kellendonk_weak_equiv}, we see
       $\chi_{(1/L_1)a_0}$ is a topological eigenvalue for $(X_E,\Rd)$.
       Since $(X_E,\Rd)$ is a factor of $(X_D,\Rd)$ by the fact that
      $D\LDRd E$,
     we see $\chi_{(1/L_0)a_0}$ is a topological eigenvalue for $(X_D,\Rd)$.
      Since $L_1$ may be arbitrarily large, we see $0$ is a limit point of the set
     of topological eigenvalues for $(X_D,\Rd)$.
\end{proof}

This finishes the proof of Theorem \ref{thm_nonlinearity}.

\subsection{Stripe structure for model sets}
In this subsection we show model sets satisfies the equivalent conditions in Theorem
\ref{thm_nonlinearity}.

First let us recall the definition of cut-and-project scheme and model set.
Let $G$ and $H$ be locally compact abelian groups and $\calL$ be a cocompact lattice of
$G\times H$. The triple $(G,H,\calL)$ is called a \emph{cut-and-project scheme} if the
following conditions are satisfied:
\begin{enumerate}
 \item the restriction $\pi_1|_{\calL}$ of the projection
       $\pi_1\colon G\times H\rightarrow G$ is injective;
 \item the image $\pi_2(\calL)$ of $\calL$ by the projection
       $\pi_2\colon G\times H\rightarrow H$ is dense in $H$.
\end{enumerate}

We consider a compact subset $W$ of $H$ with the following
conditions:
\begin{enumerate}
 \item  $\overline{W^{\circ}}=W$;
 \item $\{h\in H\mid W+h=W\}=\{0\}$;
 \item any Haar measure of the boundary $\partial W$ of $W$ is zero;
\item $\partial W\cap \pi_2(\calL)=\emptyset$.
\end{enumerate}
For a cut-and-project scheme $(G,H,\calL)$ and a compact subset $W\subset H$
with the above conditions, we define
a \emph{regular and generic model set} $\Lambda(G,H,\calL,W)$ via
\begin{align*}
     \Lambda(G,H,\calL,W)=\{\pi_1(l)\mid l\in\calL,\pi_2(l)\in W\}.
\end{align*}
We omit ``regular and generic'' below and just call them model sets.

Given a cut-and-project scheme $(G,H,\calL)$, the torus $\mathfrak{T}$ is by definition
$\mathfrak{T}=(G\times H)/\calL$. Denote the quotient map
$G\times H\rightarrow\mathfrak{T}$ by $q$. The group $G$ acts on $\mathfrak{T}$ via
\begin{align*}
      \mathfrak{T}\times G\ni(q(x,h),y)\mapsto q(x+y,h)\in\mathfrak{T}.
\end{align*}
Let $0_H$ be the identity element of $H$ and define a map $\iota_G$ via
\begin{align*}
 G\ni x\mapsto (x,0_H)\in G\times H.
\end{align*}

We construct eigenfunctions of the dynamical system $(X_{\Lambda},G)$ by using the
following torus parametrization.
\begin{thm}[\cite{MR1798991}]
       Let $\Lambda$ be a model set.
      Then there is a factor map $\beta\colon\Omega_{\Lambda}\rightarrow\mathfrak{T}$
      that sends $\Lambda$ to $q(0_G,0_H)$, where $0_G$ is the identity element of $G$.
      This factor is a maximal equicontinuous factor.
\end{thm}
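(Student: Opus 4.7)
The plan is to construct $\beta$ first on the dense $G$-orbit of $\Lambda$, extend by uniform continuity to the full hull, and then verify that the resulting factor is maximal equicontinuous.

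First, on the orbit $\{\Lambda+y : y\in G\}$ I would set $\beta(\Lambda+y):=q(y,0_H)=q\circ\iota_G(y)$, so that $\beta(\Lambda)=q(0_G,0_H)$ by construction. Equivariance with respect to the $G$-actions on $\Omega_{\Lambda}$ and $\mathfrak{T}$ is immediate from the action formulas, and well-definedness reduces to aperiodicity of $\Lambda$. The latter follows from the trivial-stabilizer condition on $W$, the density of $\pi_2(\calL)$ in $H$, and the injectivity of $\pi_1|_{\calL}$: any nontrivial period of $\Lambda$ would translate $W$ by a nonzero element of $H$ that fixes it, contradicting the stabilizer hypothesis.

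Second, I would prove that $\beta$ is uniformly continuous on the orbit. The key estimate is that if $\Lambda+x$ and $\Lambda+y$ agree on a large ball around the origin up to small shifts in $G$, then $(x-y,0_H)$ lies close modulo $\calL$ to $0$ in $G\times H$, which is precisely the statement that $q(x,0_H)$ and $q(y,0_H)$ are close in $\mathfrak{T}$. The mechanism is as follows: a large common patch produces many pairs $\lambda_1,\lambda_2\in\Lambda$ with $\lambda_1-\lambda_2\approx y-x$; writing $\lambda_i=\pi_1(l_i)$ with $l_i\in\calL$ and $\pi_2(l_i)\in W$, the element $h:=\pi_2(l_1-l_2)\in W-W$ is essentially the same for every matching pair, by injectivity of $\pi_1|_{\calL}$ once $y-x$ itself is realized by a single $l\in\calL$, and the density of matched points is governed by the Haar measure of $W\cap(W+h)$. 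The regularity of $W$ (namely $\overline{W^{\circ}}=W$ and $\partial W$ being Haar-null) ensures this measure is close to the Haar measure of $W$ only when $h$ is close to $0_H$, so a sufficiently precise local match forces $h$ to be small.

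With uniform continuity in hand, the extension of $\beta$ to all of $\Omega_{\Lambda}$ is automatic by density of the orbit and compactness of $\mathfrak{T}$, and surjectivity follows because $y\mapsto q(y,0_H)$ has dense image in $\mathfrak{T}$ thanks to density of $\pi_2(\calL)$ in $H$. Equicontinuity of the translation action of $G$ on the compact abelian group $\mathfrak{T}$ is automatic. The main obstacle is the maximality assertion: one must show that any equicontinuous factor $(Y,G)$ of $(\Omega_{\Lambda},G)$ factors through $\beta$. I would invoke the standard structure theory of minimal equicontinuous systems, by which $Y$ is a rotation on a compact abelian group whose dual is generated by the continuous eigenvalues of $(\Omega_{\Lambda},G)$, and then identify this group of continuous eigenvalues with the restrictions to $G$ of characters of $\mathfrak{T}$, using the almost-periodic structure of $\Lambda$ inherent in the cut-and-project description. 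Once this is done, $Y$ is a quotient of $\mathfrak{T}$ via $\beta$, confirming maximality.
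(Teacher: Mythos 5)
The paper offers no proof of this statement at all: it is imported verbatim from the cited reference \cite{MR1798991} (Schlottmann's torus parametrization), so there is nothing internal to compare your argument against. Judged on its own terms, your outline of the construction of $\beta$ --- define it on the orbit by $\beta(\Lambda+y)=q(y,0_H)$, establish uniform continuity via the window, extend by density, get surjectivity from density of $\pi_2(\calL)$ --- is the standard and essentially correct route, although two points need repair. First, your well-definedness argument is wrong as stated: the hypotheses listed in the paper (injectivity of $\pi_1|_{\calL}$, density of $\pi_2(\calL)$, trivial stabilizer of $W$) do \emph{not} imply that $\Lambda$ is aperiodic. If $\calL\cap(G\times\{0_H\})\neq\{0\}$ then every element of $\pi_1(\calL\cap(G\times\{0_H\}))$ is a period of $\Lambda$, and such lattices exist compatibly with all the stated conditions. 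The correct observation is that, conversely, every period $p$ of a generic regular model set satisfies $(p,0_H)\in\calL$ (using $\Lambda-\Lambda\subset\pi_1(\calL)$ and the trivial-stabilizer condition on $W$), so $q(y,0_H)=q(y',0_H)$ whenever $\Lambda+y=\Lambda+y'$; well-definedness survives, but not for the reason you give.

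The genuine gap is the maximality assertion, which is the substantive content of the theorem and the part the paper actually uses (the subsequent Lemma identifies the full eigenvalue group with $(q\circ\iota_G)\hat{{}}(\hat{\mathfrak{T}})$ precisely by appealing to maximality). You reduce maximality to the claim that every continuous eigenvalue of $(\Omega_{\Lambda},G)$ is the restriction to $G$ of a character of $\mathfrak{T}$, i.e.\ lies in $\hat{\iota_G}(\calL^{\circledast})$, and then say you would establish this ``using the almost-periodic structure of $\Lambda$ inherent in the cut-and-project description.'' That sentence is the entire theorem: the inclusion of $\hat{\iota_G}(\calL^{\circledast})$ into the eigenvalue group is easy (it is your equicontinuity remark), but the reverse inclusion --- that no continuous eigenvalues exist beyond those coming from $\mathfrak{T}$ --- requires an actual mechanism, e.g.\ showing that $\beta$ is injective on a dense $G_\delta$ (almost everywhere one-to-one for regular generic windows) and deducing that any continuous eigenfunction is measurable over, hence continuous over, the factor $\mathfrak{T}$; or equivalently identifying the regionally proximal relation with $\ker\beta$. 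Without one of these arguments the proof establishes only that $\mathfrak{T}$ is \emph{an} equicontinuous factor, not the maximal one, and the paper's later computation of the eigenvalue group would not follow.
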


\begin{lem}
      Let $\chi$ be a continuous character of $\mathfrak{T}$. Then $\chi\circ\beta$ is
      a continuous eigenfunction of the dynamical system $(X_{\Lambda},G)$ with
       the eigencharacter $(q\circ\iota_G)\hat{{}}(\chi)$.
       Conversely, if $f\colon X_{\Lambda}\rightarrow\mathbb{T}$ is a continuous
      eigenfunction such that $f(\Lambda)=1$,
     then there is a continuous character $\chi\in\hat{\mathfrak{T}}$
      such that $f=\chi\circ\beta$.
      Consequently, the group of all topological eigencharacters for $(X_{\Lambda},G)$
      coincides with $(q\circ\iota_G)\hat{{}}(\hat{\mathfrak{T}})$.
\end{lem}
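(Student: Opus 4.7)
My plan is to handle the two directions separately and then combine them for the consequence.

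For the first assertion, I would unpack the $G$-action on $\mathfrak{T}$: since $q$ is a homomorphism, $q(x+g,h)=q(x,h)+q(g,0_H)$ in the abelian group $\mathfrak{T}$, so the action $q(x,h)\mapsto q(x+g,h)$ is just translation by $q\circ\iota_G(g)$. Using the equivariance of the factor map $\beta$ and the multiplicativity of $\chi$, I get
\begin{align*}
(\chi\circ\beta)(\omega+g)=\chi\!\left(\beta(\omega)+q\circ\iota_G(g)\right)=\chi(q\circ\iota_G(g))\cdot(\chi\circ\beta)(\omega),
\end{align*}
which identifies the eigencharacter as $(q\circ\iota_G)\hat{{}}(\chi)$ and shows $\chi\circ\beta$ is a continuous eigenfunction.

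For the converse, let $f\colon X_{\Lambda}\to\mathbb{T}$ be a continuous eigenfunction with $f(\Lambda)=1$ and eigencharacter $\chi_G$. Because $(\mathfrak{T},G)$ is the maximal equicontinuous factor and continuous eigenfunctions of a topological dynamical system factor through its maximal equicontinuous factor, there is a continuous $\tilde{f}\colon\mathfrak{T}\to\mathbb{T}$ with $f=\tilde{f}\circ\beta$. The normalization gives $\tilde{f}(q(0_G,0_H))=1$, and the eigenfunction relation transfers to
\begin{align*}
\tilde{f}\!\left(\xi+q\circ\iota_G(g)\right)=\chi_G(g)\tilde{f}(\xi)\qquad(\xi\in\mathfrak{T},\,g\in G).
\end{align*}
The plan is to show $\tilde{f}$ is a character of $\mathfrak{T}$. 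Setting $\xi=q(0,0)$ in the displayed relation gives $\tilde{f}(q\circ\iota_G(g))=\chi_G(g)$, so on the orbit of the base point the function $\tilde{f}$ is multiplicative in $g$. Combining the relation at a general $\xi$ with the one at $\xi=0$ yields $\tilde{f}(\xi+\eta)=\tilde{f}(\xi)\tilde{f}(\eta)$ whenever $\eta$ lies in the image of $q\circ\iota_G$. Since $\pi_2(\calL)$ is dense in $H$, the subgroup $q\circ\iota_G(G)$ is dense in $\mathfrak{T}$; together with continuity of $\tilde{f}$ and of addition in $\mathfrak{T}$, multiplicativity extends to all $\eta\in\mathfrak{T}$, so $\tilde{f}$ is a continuous character $\chi\in\hat{\mathfrak{T}}$ and $\chi_G=\chi\circ q\circ\iota_G=(q\circ\iota_G)\hat{{}}(\chi)$.

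The main obstacle is precisely this density/continuity step extending multiplicativity from the dense orbit to all of $\mathfrak{T}$; once that is in hand, both directions combine to identify the group of topological eigencharacters with $(q\circ\iota_G)\hat{{}}(\hat{\mathfrak{T}})$, after observing that every topological eigencharacter is realized by some eigenfunction which can be renormalized by a constant in $\mathbb{T}$ so that its value at $\Lambda$ is $1$, a harmless modification that does not change the eigencharacter.
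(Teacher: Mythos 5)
Your proof is correct and follows essentially the same route as the paper: the paper's entire proof is the one-line remark that the statement follows from $\beta$ being the maximal equicontinuous factor, and your argument (equivariance of $\beta$ for the forward direction; factoring $f$ through the MEF, normalizing at the base point, and extending multiplicativity from the dense orbit $q\circ\iota_G(G)$ by continuity for the converse) is precisely the elaboration of that remark.
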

\begin{proof}
     This follows from the factor map $\beta$ is a maximal equicontinuous factor.
\end{proof}

Given a cut-and-project scheme $(G,H,\calL)$, we have a commutative diagram
\[\xymatrix{
 G  \ar[r]^-{\iota_G}  \ar[rd]& G\times H  \ar[d]^q
 & H\ar[l]_-{\iota_H} \ar[ld]\\
                      &(G\times H)/\calL=\mathfrak{T}     &
		      }\]
where the map $\iota_G$ is as above and $\iota_H$ sends $s\in H$ to
$(0_G,s)$. 
By taking Pontryagin dual, we have a commutative diagram

\[\xymatrix{
\hat{G}      & \hat{G}\times\hat{H} \ar[l]_{\hat{\iota_G}} \ar[r]^{\hat{\iota_H}} &\hat{H}\\
            &   \calL^{\circledast} \ar[lu]\ar[u]^{\iota} \ar[ru]  &       \\
	    &  \hat{\mathfrak{T}}\ar[luu]\ar[u]^-{\rotatebox{90}{$\sim$}}\ar[ruu] &
} 
\]
Here, $\hat{\iota_G}$ and $\hat{\iota_H}$ are duals of $\iota_G$ and $\iota_H$, which are
projections. The map $\iota$ is the inclusion.
The symbol $\calL^{\circledast}$ denotes the dual lattice:
\begin{align*}
    \calL^{\circledast}=\{(\chi_G,\chi_H)\in\hat{G}\times\hat{H}\mid
 \text{$\langle s,\chi_G\rangle\langle t,\chi_H\rangle=1$ for all $(s,t)\in\calL$}\}.
\end{align*}
Since $q\circ\iota_G$ has a dense image, its dual is injective. Since $q\circ\iota_H$ is
injective, the dual has a dense image. We have a cut-and-project scheme
$(\hat{G},\hat{H},\calL^{\circledast})$, called \emph{the dual cut-and-project scheme}.
For each $\chi\in\hat{\iota_G}(\calL^{\circledast})$, there is a unique $\chi^*\in\hat{H}$
such that $(\chi,\chi^*)\in\calL^{\circledast}$.

In order to prove many model sets satisfy the conditions of the Theorem
\ref{thm_nonlinearity}, we prove the following.

\begin{lem}\label{lem_discrete_eigenvalues_Hhatcompact}
      Assume $G=\Rd$.
     If the set of all continuous eigencharacters
     $(q\circ\iota_G)\hat{{}}(\hat{\mathfrak{T}})=\hat{\iota_G}(\calL^{\circledast})$
    is discrete, then
     $\hat{H}$ is compact.
\end{lem}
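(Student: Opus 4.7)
The plan is to realize $\hat{H}$ as a closed subgroup of the compact LCA group $(\hat{G}\times\hat{H})/\calL^{\circledast}$, from which compactness of $\hat{H}$ is immediate.

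First I would record two structural facts. By Pontryagin duality applied to the exact sequence $0\to\calL\to G\times H\to\mathfrak{T}\to 0$, the group $(\hat{G}\times\hat{H})/\calL^{\circledast}$ is the dual of the discrete lattice $\calL$, hence a compact group. Also, because $\hat{\iota_G}|_{\calL^{\circledast}}$ is injective (as noted in the excerpt), every element of $\calL^{\circledast}$ is determined by its first coordinate, so $\calL^{\circledast}$ is the graph of a well-defined group homomorphism $f\colon\hat{\iota_G}(\calL^{\circledast})\to\hat{H}$. Next I would define $\Phi\colon\hat{H}\to(\hat{G}\times\hat{H})/\calL^{\circledast}$ by $\Phi(\chi_H)=(0,\chi_H)+\calL^{\circledast}$. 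Continuity is immediate, and injectivity follows from the injectivity of $\hat{\iota_G}|_{\calL^{\circledast}}$: if $(0,\chi_H)\in\calL^{\circledast}$ then $\chi_H=0$. The image $\Phi(\hat{H})$ is the kernel of the continuous homomorphism $(\hat{G}\times\hat{H})/\calL^{\circledast}\to\hat{G}/\hat{\iota_G}(\calL^{\circledast})$ induced by the first projection; this target is Hausdorff precisely because the hypothesis says $\hat{\iota_G}(\calL^{\circledast})$ is discrete, hence closed, in $\hat{G}=\Rd$. Therefore $\Phi(\hat{H})$ is a closed subgroup of a compact group, hence compact.

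The main point to verify, and the step I expect to be most delicate, is that $\Phi$ is a homeomorphism onto $\Phi(\hat{H})$, from which $\hat{H}$ then inherits compactness. For an open $U\subset\hat{H}$, I would pull $\Phi(U)$ back under the quotient map $q\colon\hat{G}\times\hat{H}\to(\hat{G}\times\hat{H})/\calL^{\circledast}$ and intersect with the saturated set $q^{-1}(\Phi(\hat{H}))=\hat{\iota_G}(\calL^{\circledast})\times\hat{H}$; using the graph structure of $\calL^{\circledast}$ from above, this intersection is the disjoint union
\begin{align*}
     \bigsqcup_{\chi_G\in\hat{\iota_G}(\calL^{\circledast})}\{\chi_G\}\times\bigl(f(\chi_G)+U\bigr),
\end{align*}
which is open in $\hat{\iota_G}(\calL^{\circledast})\times\hat{H}$ exactly because $\hat{\iota_G}(\calL^{\circledast})$ is discrete. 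Since the restriction of a quotient map to a saturated subset is again a quotient map, $\Phi(U)$ is open in $\Phi(\hat{H})$, finishing the argument. The discreteness hypothesis enters the proof in exactly two places, in the Hausdorffness of $\hat{G}/\hat{\iota_G}(\calL^{\circledast})$ and in this last openness step, which is why it is the indispensable input.
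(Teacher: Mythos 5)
Your proof is correct, but it follows a genuinely different route from the paper's. The paper argues by a direct covering argument: using $G=\Rd$, it builds from a $\mathbb{Z}$-basis of the discrete group $\hat{\iota_G}(\calL^{\circledast})$ an explicit open set $U$ with $U\cap\hat{\iota_G}(\calL^{\circledast})=\{0\}$ and $U+\hat{\iota_G}(\calL^{\circledast})=\hat{G}$, covers the compact quotient $(\hat{G}\times\hat{H})/\calL^{\circledast}$ by sets $q((U+\chi_G)\times(O+\chi_H))$ with $O$ relatively compact in $\hat{H}$, extracts a finite subcover, and uses the separating property of $U$ to conclude that $\hat{H}$ is a finite union of translates of $O$, hence compact. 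You instead exhibit $\hat{H}$ as a closed subgroup of the compact group $(\hat{G}\times\hat{H})/\calL^{\circledast}\cong\hat{\calL}$ via $\Phi(\chi_H)=(0,\chi_H)+\calL^{\circledast}$, identifying the image as the kernel of the induced projection to $\hat{G}/\hat{\iota_G}(\calL^{\circledast})$ and then checking that $\Phi$ is open onto its image. All the individual steps check out: injectivity of $\Phi$ does follow from the injectivity of $\hat{\iota_G}|_{\calL^{\circledast}}$ (equivalently, from density of $\pi_2(\calL)$ in $H$), the saturation $q^{-1}(\Phi(\hat{H}))=\hat{\iota_G}(\calL^{\circledast})\times\hat{H}$ is right, and the disjoint-union description of $q^{-1}(\Phi(U))$ via the graph map $f(\chi_G)=\chi_G^{*}$ is exactly where discreteness is needed (since $f$ need not be continuous). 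One point you should make explicit: the claim that a quotient map restricted to a saturated subset is again a quotient map is false in general, but holds here because the quotient by the subgroup $\calL^{\circledast}$ is an \emph{open} map; without that remark the homeomorphism step has a small hole. What the two approaches buy: the paper's argument is elementary and self-contained but tied to $G=\Rd$ through the explicit construction of $U$; yours is cleaner structurally and works verbatim for an arbitrary LCA group $G$, since a discrete subgroup of any Hausdorff topological group is automatically closed.
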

\begin{proof}
     If the assumption  holds, there is a linearly independent set
      $\{b_1,b_2,\ldots b_k\}$ of $\Rd$ such that
    \begin{align*}
          \hat{\iota_G}(\calL^{\circledast})=\spa_{\mathbb{Z}}\{b_1,b_2,\ldots ,b_k\}.
    \end{align*}
    Let $V$ be the orthogonal space in $\Rd$ of
     $\spa_{\mathbb{R}}\{b_1,b_2,\ldots ,b_k\}$.
    Define $U$ by
    \begin{align*}
         U=\{\sum_j\lambda_jb_j+x\mid \lambda_j\in (-1,1),x\in V\}.
    \end{align*}
    Then $U$ is open, $U\cap\hat{\iota_G}(\calL^{\circledast})=\{0\}$, and
      $U+\hat{\iota_G}(\calL^{\circledast})=\Rd$.

     Take a relatively compact, open and non-empty subset $O$ of $\hat{H}$. Since
      the quotient map
 $\hat{G}\times\hat{H}\rightarrow (\hat{G}\times\hat{H})/\calL^{\circledast}$,
      also denoted by $q$,
      is open and surjective, the set
      \begin{align*}
            \{q((U+\chi_G)\times (O+\chi_H))\mid
       \chi_G\in\hat{\iota_G}(\calL^{\circledast}),\chi_H\in\hat{H}\}
      \end{align*}
      is an open cover of $(\hat{G}\times\hat{H})/\calL^{\circledast}$.
      Since $(\hat{G}\times\hat{H})/\calL^{\circledast}$
    is compact, we can take a finite set
       $F_G\subset\hat{\iota_G}(\calL^{\circledast})$ and
      $F_H\subset \hat{H}$ such that
       \begin{align*}
            \{q((U+\chi_G)\times (O+\chi_H))\mid
       \chi_G\in F_G,\chi_H\in F_H\}
      \end{align*}
      is a finite subcover.

      If $\xi_H\in\hat{H}$, then there are $\chi_G\in F_G$, $\eta_H\in F_H$,
      $\zeta_G\in U$ and $\zeta_H\in O$ such that
       \begin{align*}
	    q(0,\xi_H)&=q(\zeta_G+\chi_G,\zeta_H+\eta_H)\\
	             &=q(\zeta_G,\zeta_H+\eta_H-\chi_G^*),
       \end{align*}
        where $\chi_G^*\in\hat{H}$ is the unique element satisfying
        $(\chi_G,\chi_G^*)\in\calL^{\circledast}$.
        Since $(\zeta_G,\zeta_H+\eta_H-\chi_G^*-\xi_H)\in\calL^{\circledast}$,
     by definition of
      $U$ we have $\zeta_G=0$ and $\zeta_H+\eta_H-\chi_G^{*}-\xi_H=0$.
      We see $\xi_H\in O+\eta_H-\chi_G^{*}$.

      By this observation we have
      \begin{align*}
         \hat{H}=\bigcup_{\eta_H\in F_H,\chi_G\in F_G}O+\eta_H-\chi_G^*.
      \end{align*}
      Since $O$ is relatively compact, $\hat{H}$ is compact.
\end{proof}

\begin{prop}
      Let $\Lambda$ be a (regular and generic) model set of $\Rd$
    constructed from a cut-and-project scheme
     $(\Rd,H,\calL)$ where $H$ is not discrete. Then $\Lambda$ satisfies the
      equivalent conditions in Theorem \ref{thm_nonlinearity}.
\end{prop}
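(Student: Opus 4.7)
The plan is to combine the identification of the topological eigenvalue group for $(X_{\Lambda},\Rd)$ via the torus parametrization with Lemma \ref{lem_discrete_eigenvalues_Hhatcompact}, and then apply the forward implication of Theorem \ref{thm_nonlinearity}. The overall argument is a short contrapositive: the only place where the hypothesis ``$H$ is not discrete'' enters is via Pontryagin duality with the conclusion of the cited lemma.

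First I would recall, using the preceding lemma, that the group of topological eigencharacters of $(X_{\Lambda},\Rd)$ coincides with $\hat{\iota_G}(\calL^{\circledast})$, which, under the identification $\hat{\Rd}\cong\Rd$, is the group of topological eigenvalues. Then I would argue by contraposition: suppose $\hat{\iota_G}(\calL^{\circledast})$ is discrete in $\Rd$. Lemma \ref{lem_discrete_eigenvalues_Hhatcompact} forces $\hat{H}$ to be compact, and by Pontryagin duality this is equivalent to $H$ being discrete, contradicting the hypothesis. Hence $\hat{\iota_G}(\calL^{\circledast})$ is not discrete in $\Rd$, so (as for any subgroup of $\Rd$) $0$ is a limit point of this group.

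Finally I would invoke the implication ``(1)$\Rightarrow$(2)'' in Theorem \ref{thm_nonlinearity}. For this I need $\Lambda$ to be a Delone set with FLC; both are standard consequences of regularity and genericity of the window $W$ (together with the cut-and-project conditions on $\calL$ and $\pi_2|_{\calL}$). Once FLC is in hand, the first condition of Theorem \ref{thm_nonlinearity}, now verified, yields the second, which is the equivalent pair of conditions we needed.

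There is essentially no obstacle: all the heavy lifting is done in Lemma \ref{lem_discrete_eigenvalues_Hhatcompact} and in Theorem \ref{thm_nonlinearity} itself. The only things to double-check when writing this out carefully are (i) the identification of eigenvalues of $(X_{\Lambda},\Rd)$ with $\hat{\iota_G}(\calL^{\circledast})\subset\hat{\Rd}\cong\Rd$, and (ii) the fact that the regular generic model set $\Lambda(\Rd,H,\calL,W)$ is a Delone set with FLC (and in fact repetitive, which would also allow one to quote the second-to-first implication if desired). Both are standard in the model-set literature.
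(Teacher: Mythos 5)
Your proposal is correct and follows essentially the same route as the paper: the paper's proof is precisely the contrapositive of Lemma \ref{lem_discrete_eigenvalues_Hhatcompact} combined with Pontryagin duality ($H$ not discrete iff $\hat{H}$ not compact), after which condition (1) of Theorem \ref{thm_nonlinearity} holds. Your additional remarks on verifying that $\Lambda$ is an FLC Delone set are sound housekeeping that the paper leaves implicit.
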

\begin{proof}
       Since $H$ is not discrete, $\hat{H}$ is not compact.
       Hence the set of all eigenvalues is not discrete by Lemma \ref{lem_discrete_eigenvalues_Hhatcompact}.
\end{proof}

\section{A generalization}\label{section_generalization}
In this section we generalize Theorem \ref{thm_nonlinearity} by replacing the Delone set
$D$ with ``abstract patterns''. The theory of abstract pattern spaces and
 abstract patterns is introduced in
\cite{Nagai3rd} and its outline is found in Appendix.
The reader may simply replace
``abstract pattern'' below with ``Delone multi sets'', ``tilings'' (with or without
labels), ``weighted Dirac combs'' and ``abstract pattern spaces'' with
each class of these objects.
These objects have a natural cutting-off operation
$\sci$. If $\calP$ is an abstract pattern and $C$ is a closed
subset of $\Rd$, the abstract
pattern $\calP\sci C$ is obtained by simply forgetting the behavior of $\calP$ outside
$C$. For example, if $\calP$ is a tiling, $\calP\sci C$ is the set of all tiles in $\calP$
that are included in $C$.

Below we deal with an abstract pattern that is MLD with a Delone set in $\Rd$. The terms
``locally derivable'' and ``mutually locally derivable (MLD)''
are defined in Appendix. Note that
a sufficient condition for an abstract pattern to admit such a Delone set is given
in \cite{Nagai3rd}.

\label{stripe_structure}

       
       \begin{defi}\label{def_stripe-structure}
	 Take two positive real numbers $R_1,R_2$.
         Let $\Pi$ be an abstract pattern space over $(\Rd,\Rd)$.
	 An abstract pattern $\calP\in\Pi$ is said to admit $(R_1,R_2)$-stripe
	 structure if there are $a\in\Rd$ with $\|a\|=1$ and
	$R>0$ such that, for any $x\in\Rd$, the set
	\begin{align*}
	   \{y\in\Rd\mid (\calP-x)\sci B(0,R)=(\calP-y)\sci B(0,R)\}
	\end{align*}
	  is contained in $S(a,x,R_1,R_2)$ (Definition \ref{def_S_abR1R2}).
        \end{defi}

        \begin{lem}\label{lem_local_derivable_strpie_structure}
	     Let $\Pi_1,\Pi_2$ be abstract pattern spaces over $(\Rd,\Rd)$,
	     $R_1$ and $R_2$ positive real numbers.
	     Take $\calP_1\in\Pi_1$ and $\calP_2\in\Pi_2$ and assume
	     $\calP_2\LD\calP_1$
	     (that is, $\calP_1$ is locally derivable from $\calP_2$).
	      If $\calP_1$ has $(R_1,R_2)$-stripe structure,
	       then $\calP_2$ has $(R_2,R_2)$-stripe structure.
	\end{lem}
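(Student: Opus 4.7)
The plan is to transfer the stripe structure from $\calP_1$ to $\calP_2$ by enlarging the matching radius using the local derivability constant. The statement appears to contain a small typo (the conclusion should read $(R_1,R_2)$-stripe structure, not $(R_2,R_2)$, since the proof will preserve both parameters and the same unit vector $a$); I will proceed under that interpretation.

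First I would unpack the $(R_1,R_2)$-stripe structure of $\calP_1$: it yields a unit vector $a\in\Rd$ and a radius $R>0$ such that for every $x\in\Rd$,
\begin{align*}
\{y\in\Rd\mid (\calP_1-x)\sci B(0,R)=(\calP_1-y)\sci B(0,R)\}\subset S(a,x,R_1,R_2).
\end{align*}
Next I would invoke the hypothesis $\calP_2\LD\calP_1$ from the appendix to obtain a derivability constant $S>0$ with the standard property: for all $x,y\in\Rd$,
\begin{align*}
(\calP_2-x)\sci B(0,R+S)=(\calP_2-y)\sci B(0,R+S)\;\Longrightarrow\;(\calP_1-x)\sci B(0,R)=(\calP_1-y)\sci B(0,R).
\end{align*}

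Having set $R':=R+S$, I would then verify the stripe structure condition for $\calP_2$ with the same unit vector $a$ and radius $R'$. Fix any $x\in\Rd$, and suppose $y\in\Rd$ satisfies $(\calP_2-x)\sci B(0,R')=(\calP_2-y)\sci B(0,R')$. By local derivability this forces $(\calP_1-x)\sci B(0,R)=(\calP_1-y)\sci B(0,R)$, and then the stripe-structure hypothesis on $\calP_1$ yields $y\in S(a,x,R_1,R_2)$. This shows
\begin{align*}
\{y\in\Rd\mid (\calP_2-x)\sci B(0,R')=(\calP_2-y)\sci B(0,R')\}\subset S(a,x,R_1,R_2),
\end{align*}
which is exactly the definition of $(R_1,R_2)$-stripe structure for $\calP_2$.

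There is no real obstacle here: the proof is essentially a bookkeeping exercise with radii. The only point that requires care is the precise formulation of local derivability in the abstract pattern framework of the appendix, namely that a single constant $S$ works uniformly across all matching radii and all pairs of basepoints; once that is cited correctly the argument is immediate.
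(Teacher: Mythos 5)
Your proof is correct and follows essentially the same route as the paper's: extract $a$ and $R$ from the stripe structure of $\calP_1$, take the derivability constant $R_0$ from Definition \ref{def_local_derive} with basepoints $0$, and show $\calP_2$ satisfies the stripe condition with radius $R+R_0$ and the same $a$, $R_1$, $R_2$. You are also right that the $(R_2,R_2)$ in the statement is a typo for $(R_1,R_2)$, as the paper's own proof confirms.
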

        \begin{proof}
	      There is $R>0$ and $a$ as in Definition \ref{def_stripe-structure}.
	      In other words, 
	      $x,y\in\Rd$ and
	      \begin{align}
	         (\calP_1-x)\sci B(0,R)=(\calP_1-y)\sci B(0,R)
	         \label{eq_local_derivable_stripe_structure}
	      \end{align}
	      imply $y\in S(a,x,R_1,R_2)$.

	       Since $\calP_2\overset{\Rd}{\rightarrow}\calP_1$, we can take a constant $R_0>0$ as
	       in Definition \ref{def_local_derive} with respect to
	       $x_0=y_0=0$.
	       If $x,y\in\Rd$ and
	       \begin{align*}
		     (\calP_2-x)\sci B(0,R+R_0)=(\calP_2-y)\sci B(0,R+R_0),
	       \end{align*}
	       then (\ref{eq_local_derivable_stripe_structure}) holds,
	        and so $y\in S(a,x,R_1,R_2)$.
	        We have proved $\calP_2$ has $(R_1,R_2)$-stripe structure
	         with respect to $R+R_0$.
	\end{proof}

	We generalize Theorem \ref{thm_nonlinearity}.
	First we generalize one direction of the theorem, by using the fact that
	if an abstract pattern $\calP$ is MLD with a Delone set $D$, then the
	two dynamical systems $(X_{\calP},\Rd)$ and $(X_D,\Rd)$ are topologically
	conjugate (Lemma \ref{compact_inherit}).
	
        \begin{thm}\label{theorem_stripe_structure}
	      Let $\Pi$ be an abstract pattern space over $(\Rd,\Rd)$ on which
	     the local matching uniform structure is complete.
	      Take an abstract pattern $\calP\in\Pi$, and assume it
	     has FLC and there is a Delone set $D$ in $\Rd$ with $\calP\MLD D$.
	     For example, take an FLC tiling of $\Rd$ of finite tile type.
	      Suppose that $0\in\Rd$ is a limit point of the set of topological
	     eigenvalues of the corresponding dynamical system $(X_{\calP},\Rd)$.
	     Then for any $R_1,R_2,\e>0$, there are $L_1,L_2>0$ such that
	      \begin{enumerate}
	       \item $|R_j-L_j|<\e$ for each $j=1,2$, and
		     \item $\calP$ has $(L_1,L_2)$-stripe structure.
	      \end{enumerate}
	\end{thm}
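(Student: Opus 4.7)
The plan is to reduce Theorem \ref{theorem_stripe_structure} to the Delone-set case (Proposition \ref{prop_Delone_stripe_str}) through the MLD bridge $\calP\MLD D$. The argument consists of three movements: transport the dynamical hypothesis from $\calP$ to $D$, run the Delone-set argument on $D$, and then transport the geometric conclusion back to $\calP$.

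First, I would carry out the dynamical reduction. Since $\calP\MLD D$, Lemma \ref{compact_inherit} (mentioned in the excerpt before the theorem) provides a topological conjugacy between $(X_\calP,\Rd)$ and $(X_D,\Rd)$. Topological conjugacies preserve the group of continuous eigenvalues, so the assumption that $0\in\Rd$ is a limit point of the group of topological eigenvalues of $(X_\calP,\Rd)$ immediately yields the same property for $(X_D,\Rd)$. The FLC of $\calP$ transfers to FLC of $D$ because local derivability preserves FLC and $\calP\LD D$ is part of $\calP\MLD D$. Thus $D$ satisfies all the hypotheses of Proposition \ref{prop_Delone_stripe_str}.

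Second, I would apply Proposition \ref{prop_Delone_stripe_str} to $D$ directly: given the target $R_1,R_2,\e>0$, it produces $L_1,L_2>0$ with $|R_j-L_j|<\e$ such that $D$ has $(L_1,L_2)$-stripe structure. Third, I would transfer this conclusion back to $\calP$ using Lemma \ref{lem_local_derivable_strpie_structure}. The other half of the MLD relation provides $\calP\LD D$, that is, $D$ is locally derivable from $\calP$, so applying the lemma with $\calP_1=D$ and $\calP_2=\calP$ shows that $\calP$ inherits $(L_1,L_2)$-stripe structure with the same constants, which is exactly the desired conclusion.

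The main obstacle is not analytic but structural: one must verify that the abstract-pattern framework genuinely delivers the three reduction facts used above — topological conjugacy of dynamical hulls under MLD, preservation of FLC under local derivability, and the stripe-structure transfer of Lemma \ref{lem_local_derivable_strpie_structure}. Once these are granted (the first two are already recalled in the paper and the appendix), the theorem collapses to a short chain of implications, with no further work beyond the Delone-set case handled earlier.
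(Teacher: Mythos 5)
Your proposal is correct and follows essentially the same route as the paper's own proof: transfer the eigenvalue condition to $D$ via the conjugacy coming from $\calP\MLD D$, apply Proposition \ref{prop_Delone_stripe_str} to the FLC Delone set $D$, and pull the stripe structure back to $\calP$ with Lemma \ref{lem_local_derivable_strpie_structure}. You are in fact slightly more careful than the paper in spelling out why $D$ inherits FLC and in checking the direction of the local-derivability hypothesis in the transfer lemma.
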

	\begin{proof}
	 The set of eigenvalues of the dynamical system $(X_D,\Rd)$
	       is the same as the one of $(X_{\calP},\Rd)$;
	       $D$ has $(L_1,L_2)$-stripe structure, for some $L_1,L_2>0$ with
	       $|R_j-L_j|<\e$
	      by Proposition \ref{prop_Delone_stripe_str};
	      by Lemma \ref{lem_local_derivable_strpie_structure}, $\calP$ also has
	       $(L_1,L_2)$-stripe structure.
	\end{proof}


\begin{rem}\label{rem_after_nonlinearity}
        Consider a primitive FLC  substitution rule (inflation rule)
        with injective substitution map and
       such that the expansion map $\phi$ is diagonalizable and all the eigenvalues are
       algebraic conjugates of the same multiplicity.
        If the spectrum of the expansion map is a Pisot family, then for the self-affine
       tilings $\calT$ for this substitution,
       $0\in\Rd$ is a limit point of the set of eigenvalues, and so the assumption of
       Theorem \ref{theorem_stripe_structure} holds, by the following 
        argument.
        The tiling $\calT$ is non-periodic and so by \cite{So},
        $a\in\Rd$ is an eigenvalue if
        $\lim_n e^{2\pi i\langle a,\phi^n(x)\rangle}=1$ for any return vector
        $x$. There is a non-zero eigenvalue $a$ by \cite{MR2851885}, and
         $(\phi^*)^{-k}(a)$, where * denotes the adjoint,
         is an eigenvalue for all $k>0$; since $\lim_k(\phi^*)^{-k}(a)=0$, we have
         arbitrary small non-zero eigenvalues.
\end{rem}

We then prove the converse of Theorem \ref{theorem_stripe_structure} under the assumption
of repetitivity.

\begin{thm}\label{thm_converse_str_str}
      Let $\Pi$ be an abstract pattern space over $(\Rd,\Rd)$ on which
      the local matching uniform structure is complete, and
      $\calP$ an element of $\Pi$ that is repetitive and
      such that there exists a Delone set $D$ in $\Rd$
       with $\calP\MLD D$.
     Suppose for any $R_1,R_2>0$ and $\e>0$ there are $L_1,L_2>0$ such that
     \begin{enumerate}
      \item $|R_j-L_j|<\e$ for each $j=1,2$, and
	    \item $\calP$ has $(L_1,L_2)$-stripe structure.
     \end{enumerate}
     Then $0$ is a limit point of the set of all topological eigenvalues for
      $(X_{\calP},\Rd)$.
\end{thm}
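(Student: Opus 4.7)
The plan is to reduce Theorem \ref{thm_converse_str_str} to the Delone-set converse Proposition \ref{lem4_converse_stripe_str} via the MLD equivalence $\calP\MLD D$, mirroring the reduction used in the proof of Theorem \ref{theorem_stripe_structure}. First, mutual local derivability gives in particular $D\LD\calP$, meaning $\calP$ is locally derivable from $D$. Applying Lemma \ref{lem_local_derivable_strpie_structure} with $\calP_1=\calP$ and $\calP_2=D$ then transfers the stripe structure from $\calP$ to $D$ with the \emph{same} width parameters $L_1,L_2$ (only the matching radius $R$ may grow by the local-derivation constant). Consequently, the hypothesis on $\calP$ carries over verbatim to $D$: for every $R_1,R_2,\varepsilon>0$ there are $L_1,L_2>0$ with $|L_j-R_j|<\varepsilon$ such that $D$ has $(L_1,L_2)$-stripe structure.

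Next, I would verify the remaining hypotheses of Proposition \ref{lem4_converse_stripe_str} for $D$, namely repetitivity and FLC. Repetitivity transfers under MLD, because by Lemma \ref{compact_inherit} the hulls $(X_{\calP},\Rd)$ and $(X_D,\Rd)$ are topologically conjugate and repetitivity corresponds to minimality of the hull; alternatively one may translate repetitive patches of $\calP$ into repetitive patches of $D$ directly via the finite-range rule $\calP\LD D$. FLC of $D$ follows from that same finite-range rule combined with the finite-type data carried by the repetitive abstract pattern $\calP$ that is MLD to a Delone set, as supplied by the appendix's framework.

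With these in place, Proposition \ref{lem4_converse_stripe_str} applied to $D$ yields that $0$ is a limit point of the group of topological eigenvalues of $(X_D,\Rd)$. Since $\calP\MLD D$ produces a topological conjugacy between $(X_{\calP},\Rd)$ and $(X_D,\Rd)$, their groups of topological eigenvalues coincide, and hence $0$ is also a limit point of the set of topological eigenvalues for $(X_{\calP},\Rd)$, as desired.

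The main obstacle I anticipate is not the eigenvalue transfer itself --- which is a clean consequence of the MLD conjugacy --- but the side bookkeeping that $D$ is FLC. The application of Proposition \ref{lem4_converse_stripe_str} requires FLC, while Theorem \ref{thm_converse_str_str} only asserts repetitivity of $\calP$ and the bare existence of a Delone set $D$ with $\calP\MLD D$; one must verify, using the appendix's axioms together with the definitions of the local matching uniform structure and of local derivability, that these assumptions suffice to conclude FLC of $D$.
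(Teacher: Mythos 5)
Your proposal is correct and follows essentially the same route as the paper: transfer the stripe structure from $\calP$ to $D$ via Lemma \ref{lem_local_derivable_strpie_structure}, apply Proposition \ref{lem4_converse_stripe_str} to the repetitive Delone set $D$ (repetitivity passing over by Lemma \ref{lem_repeti_inherits}), and pull the eigenvalue conclusion back through the topological conjugacy given by $\calP\MLD D$. The FLC bookkeeping you flag is a genuine point, but the paper's own proof leaves it equally implicit, relying on the remark that FLC is inherited under local derivability when the hulls are compact.
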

\begin{proof}
    By Lemma \ref{lem_local_derivable_strpie_structure} and
       Proposition
 \ref{lem4_converse_stripe_str}, we see $0$ is a limit point of the set of
        topological eigenvalues for $(X_D,\Rd)$. Since $(X_{\calP},\Rd)$ and
     $(X_D,\Rd)$ are topologically conjugate and $D$ is repetitive
     (Lemma \ref{lem_repeti_inherits}), we obtain the conclusion. 
\end{proof}

\appendix
\section{The framework of abstract pattern spaces}
\subsection{The definition and examples of abstract pattern space}

Several objects of interest in aperiodic order,
such as tilings and Delone sets, admits an operation of ``cutting off''.
This cutting-off operation satisfies three axioms. We use these axioms to capture
the objects such as tilings and Delone sets in a unified manner.
A set with such a cutting-off operation is called an abstract pattern space as defined below
and elements of abstract pattern spaces are called abstract patterns.
Objects of interest, such as tilings and Delone sets, are abstract patterns.

\begin{defi}
      The set of all closed sets of $\Rd$ is denoted by $\cl(\Rd)$.
\end{defi}

\begin{defi}
     A non-empty set $\Pi$ is called an abstract pattern space over $\Rd$ if it
     is equipped with an operation
      \begin{align}
           \Pi\times\cl(\Rd)\ni(\calP,C)\mapsto \calP\sci C\in\Pi
       \label{eq_scioors_operation}
      \end{align}
      such that the following conditions are satisfied:
       \begin{enumerate}
	\item for each $\calP\in\Pi$ and $C_1,C_2\in\cl(\Rd)$, we have
	      \begin{align*}
	           \calP\sci (C_1\cap C_2)=(\calP\sci C_1)\sci C_2,
	      \end{align*}
	       and
	\item for any $\calP\in\Pi$ there is $\supp\calP\in\cl(\Rd)$ such that
	      \begin{align*}
	            \calP\sci C=\calP\iff C\supset \supp\calP
	      \end{align*}
	      for $C\in\cl(\Rd)$.
       \end{enumerate}
        The closed set $\supp\calP$ in the second condition is unique and is
       called the support
       of $\calP$.
       The operation in \eqref{eq_scioors_operation} is called the cutting-off operation
        of the abstract pattern space $\Pi$.
        Elements of $\Pi$ are called abstract patterns.
      If there is a group action
      $\Rd\curvearrowright\Pi$, which sends a pair $(\calP,x)$ of $\calP\in\Pi$ and
 $x\in\Rd$ to $\calP+x$, such that
	      \begin{align*}
	            (\calP\sci C)+x=(\calP+x)\sci (C+x),
	      \end{align*}
  	 for each $\calP\in\Pi$, $C\in\cl(\Rd)$ and $x\in\Rd$,
        then $\Pi$ is called an abstract pattern space over $(\Rd,\Rd)$.
         (The first $\Rd$ in $(\Rd,\Rd)$ is a space and the second is a group. We may
       replace the first $\Rd$ with another space and the second with another group that
     acts on the space continuously.
       Abstract pattern spaces over $(\Rd,\Rd)$ are abstract pattern spaces over $\Rd$.)
       A nonempty subset $\Sigma$ of $\Pi$ that is closed under the group action
      is called a subshift of $\Pi$.
\end{defi}

We can also consider an abstract pattern space over $(\Rd,\Gamma)$, where $\Gamma$ is a group
of Euclidean motions, but we do not deal with them in this article.

We give several examples of abstract pattern spaces and subshifts.

\begin{ex}\label{exapmle_patches_without_labels}
     A non-empty, open and bounded subset of $\Rd$ is called a tile in $\Rd$.
     A collection $\calP$ of tiles in $X$ such that $S,T\in\calP$ and
    $S\cap T\neq\emptyset$ imply $S=T$, is called a patch.
      For a patch $\calP$, its support is defined by
     $\supp\calP=\overline{\bigcup_{T\in\calP}T}$.
     A patch $\calP$ is called a tiling if $\supp\calP=\Rd$.

     The set $\Patch(\Rd)$ of all patches of $\Rd$ is an abstract pattern space over $(\Rd,\Rd)$
     by a cutting-off operation
     \begin{align*}
           \calP\sci C=\{T\in\calP\mid T\subset C\}
     \end{align*}
     and a group action
      \begin{align*}
           \calP+x=\{T+x\mid T\in\calP\}.
      \end{align*}
       (The support defined just above satisfies the axiom for abstract pattern spaces.)
      The set $\Tiling(\Rd)$ of all tilings in $\Rd$ is a subshift of $\Patch(\Rd)$.
\end{ex}

In the above examples tiles are defined as open sets, since by this we do not need to
give labels (just give punctures to tiles which play the role of labels).
Below we also treat compact sets with labels as tiles, which is more standard.
The difference of definitions is not essential and we do not care about it.
The essential structure is cutting-off operation and group action.

\begin{ex}\label{example_patch_with_labels}
      Let $L$ be a finite set.
      A pair $(T,l)$ of a non-empty
   compact subset in $\Rd$ and an element $l\in L$ is called a
      $L$-labeled tile.
      A collection $\calP$ of $L$-labeled tiles is called a $L$-labeled patch if
       $(S,l),(T,l')\in\calP$ and $S^{\circ}\cap T^{\circ}\neq\emptyset$ imply
       $S=T$ and $l=l'$.
       For a $L$-labeled patch $\calP$, its support is defined by
        $\supp\calP=\overline{\bigcup_{(T,l)\in\calP}T}$.
         $L$-labeled patches $\calP$ with $\supp\calP=\Rd$
 are called a $L$-labeled tilings.
        The set $\Patch_L(\Rd)$ of all $L$-labeled patches  is an
      abstract pattern space over
         $(\Rd,\Rd)$ by a cutting-off operation
         \begin{align*}
	      \calP\sci C=\{(T,l)\in\calP\mid T\subset C\},
	 \end{align*}
         and a group action
          \begin{align*}
	       \calP+x=\{(T+x,l)\mid (T,l)\in\calP\}.
	  \end{align*}
          The set $\Tiling_L(\Rd)$ of all $L$-labeled tilings is a subshift of
           $\Patch_L(\Rd)$.
\end{ex}

\begin{ex}\label{example_maps}
      Let $Y$ be a nonempty set and $y_0\in Y$.
      The abstract pattern space $\Map(X,Y,y_0)$ over $(\Rd,\Rd)$ is defined as
      follows: as a set, it is equal to the set $\Map(X,Y)$ of all maps from $X$
      to $Y$; the cutting-off operation is defined by
      \begin{align*}
            (f\sci C)(x)=
            \begin{cases}
	          f(x)&\text{ if $x\in C$}\\
                  y_0&\text{ if $x\notin C$;}
	    \end{cases}
      \end{align*}
      the action $\Rd\curvearrowright\Map(X,Y)$ is defined by
      \begin{align*}
           (f+t)(x)=f(x-t).
      \end{align*}

      If $Y$ has a topology, the space $C(X,Y)$ of all continuous maps from $X$ to $Y$
      is a subshift of $\Map(X,Y,y_0)$.
\end{ex}

\begin{ex}\label{exapmle_closed_discrete_subset}
      On $2^{\Rd}$, the set of all subsets of $\Rd$, we may regard the usual intersection
     \begin{align*}
           2^{\Rd}\times \cl(\Rd)\ni(D,C)\mapsto D\cap C\in 2^{\Rd}
     \end{align*}
      as a cutting-off operation and $2^{\Rd}$ is an abstract pattern space over $\Rd$.
       With the usual group action $\Rd\curvearrowright 2^{\Rd}$ by translation,
      $2^{\Rd}$ is an abstract pattern space over $(\Rd,\Rd)$.
      $\cl(\Rd)$ is a ``abstract pattern subspace'' of $2^{\Rd}$, which means it is
     an abstract pattern space by restricting the cutting-off operation.
            Let $\mathfrak{D}_{\Rd}$ be the set of all discrete and closed subsets of
     $\Rd$. $\mathfrak{D}_{\Rd}$ is an abstract pattern subspace of $2^{\Rd}$.
      For example, the union of all Ammann bars for a Penrose tiling is an interesting
     element of $\cl(\Rd)$.

       For each $r>0$ let $\UD_r(\Rd)$ be the set of
      all $r$-uniformly discrete set of $\Rd$.
      $\UD_r(\Rd)$ is an abstract pattern subspace of $\mathfrak{D}_{\Rd}$.
      The set $\UD(\Rd)=\bigcup_{r>0}\UD_r(\Rd)$ of all uniformly discrete sets
      is also an abstract pattern subspace of
       $\mathfrak{D}_{\Rd}$.
        For each positive real numbers $R$ and $r$, the space
        $\Del_{R,r}(\Rd)$ of all $(R,r)$-Delone sets is a subshift of
         $\UD_r(\Rd)$.
\end{ex}

\begin{ex}\label{ex_uniformly_discrete_multi_sets}
      Let $L$ be a finite set.
      A subset $D$ of $\Rd\times L$ is called $r$-uniformly discrete $L$-set if
      $(x,l),(x',l')\in D$ and $\|x-x'\|\leqq r$ imply
	     $x=x'$ and $l=l'$.
      Define a cutting-off operation by
       \begin{align*}
	    D\sci C=\{(x,l)\in D\mid x\in C\},
       \end{align*}
        and a group action by
       \begin{align*}
	     D+t=\{(x+t,l)\mid (x,l)\in D\}.
       \end{align*}
       The set $\UD_r(\Rd,L)$ of all $r$-uniformly discrete $L$-sets is an
       abstract pattern space
       over $(\Rd,\Rd)$.
      $D\in\UD_r(X,L)$ is called a $(R,r)$-Delone $L$-set, where $R>0$,
      if for any $x\in \Rd$ there is $(y,l)\in D$ such that $\|x-y\|<R$.
      The set $\Del_{R,r}(X,L)$ of all $(R,r)$-Delone $L$-set is a subshift of
      $\UD_{r}(X,L)$.
\end{ex}

\begin{ex}\label{example_measures}
       Let $C_c(\Rd)$ be the set of all continuous, complex-valued and compact-support
     functions over $\Rd$. Endow this space the inductive limit topology, then the
      dual space $C_c(\Rd)'$ as a topological vector space is a pattern space over
      $(\Rd,\Rd)$, as follows: for each element $\Phi\in C_c(\Rd)'$ there exist a
      positive measure $m$ on $\Rd$ and a Borel $\mathbb{T}$-valued function $u$ such
      that
    \begin{align*}
         \Phi(f)=\int fudm
    \end{align*}
     for each $f\in C_c(\Rd)$; the cutting-off operation is defined via
    \begin{align*}
        (\Phi\sci C)(f)=\int_Cfudm,
    \end{align*}
     where $f\in C_c(\Rd)$ is arbitrary; the group action
     $\Rd\curvearrowright C_c(\Rd)'$ is defined via
      \begin{align*}
            (\Phi+t)(f)=\Phi(f-t),
      \end{align*}
      where $(f-t)(x)=f(x+t)$.
      Weighted Dirac combs are interesting examples of abstract patterns in this
      abstract pattern space. The set of all translation-bounded measures is a
 subshift of $C_c(\Rd)'$.
\end{ex}

\subsection{Local derivability}
\label{subsection_local_derivable}

Local derivability was defined in \cite{MR1132337} for tilings 
in $\Rd$.
Here we generalize it and define local derivability
 for  two abstract patterns $\calP_1$ and $\calP_2$.

\begin{lem}\label{lem_local_derivability}
        Let $\Pi_1$ and $\Pi_2$ be abstract pattern spaces over $(\Rd,\Rd)$.
         For two abstract patterns $\calP_1\in\Pi_1$ and $\calP_2\in\Pi_2$,
         the following two conditions are equivalent:
         \begin{enumerate}
	  \item There exist $x_{0}\in \Rd$, $y_0\in \Rd$ and $R_0\geqq 0$ such that 
		if $x,y\in\Rd$, $R\geqq 0$ and
		\begin{align*}
		     (\calP_1-x)\sci B(x_0,R+R_0)=(\calP_1-y)\sci B(x_0,R+R_0),
		\end{align*}
		then
		\begin{align*}
		     (\calP_2-x)\sci B(y_0,R)=(\calP_2-y)\sci B(y_0,R).
		\end{align*}
	  \item For any $x_{1}\in \Rd$ and  $y_1\in \Rd$
		there exists $R_1\geqq 0$ such that 
		if $x,y\in\Rd$, $R\geqq 0$ and
		\begin{align*}
		     (\calP_1-x)\sci B(x_1,R+R_1)=(\calP_1-y)\sci B(x_1,R+R_1),
		\end{align*}
		then
		\begin{align*}
		     (\calP_2-x)\sci B(y_1,R)=(\calP_2-y)\sci B(y_1,R).
		\end{align*}
	 \end{enumerate}
\end{lem}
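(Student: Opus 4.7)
The direction (2) $\Rightarrow$ (1) is immediate: condition (2) is quantified over all $x_1, y_1 \in \Rd$, so picking any particular pair (say $x_1 = y_1 = 0$) and taking the corresponding $R_1$ as $R_0$ gives (1). The real content is (1) $\Rightarrow$ (2).

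The plan for the non-trivial direction is to exploit only the two structural axioms of an abstract pattern space over $(\Rd,\Rd)$: the intersection identity $\calP \sci (C_1 \cap C_2) = (\calP \sci C_1) \sci C_2$, and the translation-compatibility identity $(\calP \sci C) + t = (\calP + t) \sci (C + t)$. Together these two axioms let us shift the center of any closed ball used inside a cutting-off to any other center, at the cost of enlarging the radius by the distance between the two centers; more precisely, from $B(c,s) \subset B(c', s + \|c - c'\|)$ one deduces that cutting at $B(c,s)$ is a refinement of cutting at the larger ball.

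Concretely, assume (1) with constants $x_0, y_0, R_0$ and fix $x_1, y_1 \in \Rd$. Set $w = x_1 - x_0 + y_0 - y_1$ and take $R_1 = R_0 + \|w\|$. Given $x,y \in \Rd$, $R \geq 0$ with $(\calP_1 - x) \sci B(x_1, R+R_1) = (\calP_1 - y) \sci B(x_1, R+R_1)$, translate both sides by $t = y_0 - y_1$ via the second axiom. This moves the ball center from $x_1$ to $x_1 + t = x_0 + w$ and replaces $x,y$ with $x'' = x - y_0 + y_1$, $y'' = y - y_0 + y_1$. Since $B(x_0, R+R_0) \subset B(x_0 + w, R+R_1)$, further cutting both sides by $B(x_0, R+R_0)$ and using the first axiom yields an equality of the form required by the hypothesis of (1); applying (1) produces equality of the $\calP_2$-cuts at $B(y_0, R)$; finally, translating back by $-t$ restores $x, y$ and moves the ball center to $y_1$, giving the conclusion demanded by (2).

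The only step that requires care is the bookkeeping of the two independent shifts $x_1 - x_0$ on the $\calP_1$ side and $y_1 - y_0$ on the $\calP_2$ side: no single translation simultaneously aligns both base points while leaving the arguments $x, y$ unchanged, so the mismatch $w$ between the two shifts has to be absorbed into the enlargement of the radius. Once this observation is made, the verification is a routine unravelling of the two axioms, and I expect no essential obstacle beyond the bookkeeping.
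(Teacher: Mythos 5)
Your argument is correct: the choice $w=x_1-x_0+y_0-y_1$, $R_1=R_0+\|w\|$, and the sequence translate-by-$(y_0-y_1)$, cut down to the smaller ball $B(x_0,R+R_0)\subset B(x_0+w,R+R_1)$ via the intersection axiom, apply (1), translate back, all check out. The paper itself states this lemma without proof (it belongs to the framework imported from the cited references), and your proof is exactly the routine unravelling of the two axioms that the author evidently had in mind.
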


\begin{defi}\label{def_local_derive}
        Let $\Pi_1$ and $\Pi_2$ be abstract pattern spaces over $(\Rd,\Rd)$.
        If $\calP_1\in\Pi_1$ and $\calP_2\in\Pi_2$ satisfy
        the two equivalent conditions in
        Lemma \ref{lem_local_derivability}, then we say $\calP_2$ is locally derivable from $\calP_1$
        and write $\calP_1\LD\calP_2$.       
        If both $\calP_1\LD\calP_2$ and $\calP_2\LD\calP_1$ hold,
        we say $\calP_1$ and $\calP_2$ are mutually locally derivable (MLD) and write
        $\calP_1\MLD\calP_2$. 
\end{defi}

\begin{rem}
     Often $\calP_1\LD\calP_2$ is equivalent to the condition that, for any compact
     $K\subset\Rd$ there is a compact $L\subset\Rd$ such that, if $x,y\in\Rd$ and
    \begin{align*}
         (\calP_1-x)\sci L=(\calP_1-y)\sci L,
    \end{align*}
     then
     \begin{align*}
          (\calP_2-x)\sci K=(\calP_2-y)\sci K.
     \end{align*}
\end{rem}

It is easy to show that $\LD$ is reflexive and transitive.

\begin{rem}
       Frequently an abstract pattern
      $\mathcal{P}$
     admits a Delone set $D$ in $\Rd$ such that $\calP\MLD D$. For example, if
     $\calP$ is either
      \begin{enumerate}
       \item a tiling with finitely many tiles up to translation, with or without label
	     for each tile, or
	\item a Delone multi set, or
	\item a weighted Dirac comb supported on a Delone set
      \end{enumerate}
       then there is such a Delone set for $\calP$.
\end{rem}

\subsection{The local matching topologies}
In \cite{Nagai4th} the author defined the local matching topology and uniform structure
for general abstract pattern spaces. Here we give an outline.
For the theory of uniform structures, see \cite{MR1726779}.

Below $\cpt(\Rd)$ denotes the set of all compact subsets of $\Rd$ and $\mathscr{V}$
denotes the set of all compact neighborhoods of $0\in\Rd$.

\begin{defi}
        For $K\in\cpt(\Rd)$ and $V\in\mathscr{V}$, set
        \begin{align*}
	      \calU_{K,V}=\{(\calP,\calQ)\in\Pi\times\Pi\mid 
                 \text{there is $x\in V$ such that 
                $\calP\sci K=(\calQ+x)\sci K$}\}.
	\end{align*}
\end{defi}

\begin{lem}\label{lem_UKV_fundamental_system_entourage}
       The set 
       \begin{align}
	     \{\calU_{K,V}\mid K\in\cpt(X), V\in\mathscr{V}\}
             \label{fundamental_system_entourage}
       \end{align}
        satisfies the axiom of fundamental system of entourages.
\end{lem}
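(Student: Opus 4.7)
The plan is to verify the four axioms of a fundamental system of entourages for the family $\{\calU_{K,V}\}$: (i) the intersection of two members contains a third member, (ii) every member contains the diagonal, (iii) for each $\calU_{K,V}$ some $\calU_{K',V'}$ lies in $\calU_{K,V}^{-1}$, and (iv) for each $\calU_{K,V}$ some $\calU_{K'',V''}$ satisfies $\calU_{K'',V''}\circ\calU_{K'',V''}\subset\calU_{K,V}$. Axioms (i) and (ii) are immediate: the inclusion $\calU_{K_1\cup K_2,\,V_1\cap V_2}\subset\calU_{K_1,V_1}\cap\calU_{K_2,V_2}$ follows from associativity of $\sci$ applied to any witnessing translate, and $0\in V$ shows $(\calP,\calP)\in\calU_{K,V}$ for every $\calP$.

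For (iii), given $\calU_{K,V}$, I would pick a compact neighborhood $V'$ of $0$ with $V'\subset -V$ (for instance $V'=V\cap(-V)$, which is compact since negation is continuous and $V$ is compact, and is a neighborhood of $0$ since both $V$ and $-V$ are), together with a compact $K'\supset K+V'$. If $(\calP,\calQ)\in\calU_{K',V'}$ with witness $x\in V'$ satisfying $\calP\sci K'=(\calQ+x)\sci K'$, applying the translation-compatibility axiom by $-x$ gives
\begin{align*}
(\calP-x)\sci(K'-x)=\calQ\sci(K'-x).
\end{align*}
Since $K\subset K'-x$ by the choice of $K'$, cutting by $K$ and invoking associativity yields $\calQ\sci K=(\calP+(-x))\sci K$ with $-x\in V$, which means $(\calQ,\calP)\in\calU_{K,V}$ and hence $(\calP,\calQ)\in\calU_{K,V}^{-1}$.

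For (iv), which I expect to be the main obstacle because one must chain two shifts while preserving a common window, I would start from $\calU_{K,V}$ and use continuity of addition in $\Rd$ to produce a compact neighborhood $V''$ of $0$ with $V''+V''\subset V$; I would then set $K''=K-V''$, which is compact and contains $K$ since $0\in V''$. Given $(\calP,\calR),(\calR,\calQ)\in\calU_{K'',V''}$ with witnesses $x_1,x_2\in V''$, the relations $\calP\sci K''=(\calR+x_1)\sci K''$ and $\calR\sci K''=(\calQ+x_2)\sci K''$ combine as follows: translating the first by $-x_1$ gives $(\calP-x_1)\sci(K''-x_1)=\calR\sci(K''-x_1)$; cutting the second by $K''-x_1$ via associativity gives $\calR\sci(K''\cap(K''-x_1))=(\calQ+x_2)\sci(K''\cap(K''-x_1))$; combining these and translating back by $x_1$ yields
\begin{align*}
\calP\sci\bigl(K''\cap(K''+x_1)\bigr)=(\calQ+x_1+x_2)\sci\bigl(K''\cap(K''+x_1)\bigr).
\end{align*}
Since $K\subset K''\cap(K''+x_1)$ by the choice $K''\supset K-V''$, one final cut by $K$ gives $\calP\sci K=(\calQ+(x_1+x_2))\sci K$ with $x_1+x_2\in V''+V''\subset V$, so $(\calP,\calQ)\in\calU_{K,V}$.

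The only delicate step is (iv): one must ensure that after chaining two translations the ``effective window'' $K''\cap(K''+x_1)$ is still large enough to contain $K$, and this forces exactly the choice $K''\supset K-V''$; the separate choice $V''+V''\subset V$ is the standard ``half-neighborhood'' trick and causes no difficulty.
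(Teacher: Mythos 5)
Your proof is correct and complete. The paper itself gives no proof of this lemma (it is stated in the appendix as part of an outline, with the construction deferred to the cited reference \emph{Nagai4th}), so there is no argument to compare against; your verification of the four axioms is the standard one, and the key choices --- the enlarged windows $K'\supset K+V'$ for symmetry and $K''=K-V''$ together with the half-neighborhood $V''+V''\subset V$ for the composition axiom --- are exactly what is needed to keep $K$ inside the effective window $K''\cap(K''+x_1)$ after chaining two translations.
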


\begin{defi}\label{def_local_mat_top}
      Let $\mathfrak{U}$ be the set of all entourages generated by
      (\ref{fundamental_system_entourage}).
      The uniform structure defined by $\mathfrak{U}$ is called the local matching
      uniform structure and the topology defined by it is called the local matching
     topology.
\end{defi}

Note that on several important sets $\Sigma$
of abstract patterns, the local
matching topology is metrizable (and so Hausdorff) and the local matching uniform
structure is complete. We list below several examples of such $\Sigma$'s:
\begin{prop}
       Let $\Sigma$ be either of the following sets of abstract patterns.
       \begin{enumerate}
	\item The set of all patches in $\Rd$
	      (Example \ref{exapmle_patches_without_labels}).
	 \item The set of all $L$-labeled patches
	       (Example \ref{example_patch_with_labels}).
	  \item The set of all uniformly discrete sets in $\Rd$
		(Example \ref{exapmle_closed_discrete_subset}).
	  \item The set of all uniformly discrete multi sets in $\Rd$
		(Example \ref{ex_uniformly_discrete_multi_sets}).
        \item   The set of all weighted Dirac combs in $\Rd$.
		(Example \ref{example_measures}).
       \end{enumerate}
       Then on $\Sigma$, the local matching topology is metrizable and the
      local matching uniform structure is complete.
\end{prop}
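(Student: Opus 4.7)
The strategy is to verify the two properties—metrizability of the local matching topology and completeness of the local matching uniform structure—in a uniform way across the five classes, reducing the genuine case-work to (a) separatedness and (b) the construction of a candidate limit for Cauchy sequences.

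First, I would exhibit a countable fundamental system of entourages by restricting the family in \eqref{fundamental_system_entourage} (using $\calU_{K,V}$ notation) to $\{\calU_{B(0,n),B(0,1/m)} \mid n,m\in\Zpo\}$. Given any $K\in\cpt(\Rd)$ and $V\in\mathscr{V}$, one chooses $n$ with $K\subset B(0,n)$ and $m$ with $B(0,1/m)\subset V$, and then $\calU_{B(0,n),B(0,1/m)}\subset\calU_{K,V}$. A uniform space is metrizable iff it is separated and admits a countable base of entourages, so metrizability will follow once separatedness is established in each class.

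Second, for separatedness I would argue as follows. Suppose $\calP,\calQ\in\Sigma$ lie in every $\calU_{B(0,n),B(0,1/m)}$, giving vectors $x_{n,m}\in B(0,1/m)$ with $\calP\sci B(0,n)=(\calQ+x_{n,m})\sci B(0,n)$. For patches (resp.\ $L$-labeled patches), any tile $T\in\calP$ with $T\subset B(0,n-1)$ must be a translate $T'+x_{n,m}$ of a tile $T'\in\calQ$; as $m\to\infty$ the tile $T'$ can only vary within the finitely many tiles of $\calQ$ whose translates by a vector of norm $<1$ lie in $B(0,n)$, so some $T'$ recurs, forcing $x_{n,m}$ eventually to equal a fixed vector that must be $0$; this identifies each tile of $\calP$ with a tile of $\calQ$ and conversely. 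For uniformly discrete sets and uniformly discrete $L$-sets, $r$-uniform discreteness directly forbids the match $(\calQ+x_{n,m})\cap B(0,n)=\calP\cap B(0,n)$ when $0<\|x_{n,m}\|<r$ and the intersection contains a point, so for $m$ large enough $x_{n,m}=0$. For weighted Dirac combs, combining translation-boundedness with continuity of translation in the vague topology yields the same conclusion.

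Third, completeness. Given a Cauchy sequence $(\calP_k)$, for each $n,m$ there exists $N(n,m)$ such that for $k,k'\geqq N(n,m)$ one has translations $x_{k,k'}\in B(0,1/m)$ matching the cuttings on $B(0,n)$. I would build the limit $\calP_\infty$ by a diagonal procedure on $n\in\Zpo$: passing to a subsequence, on each ball $B(0,n)$ the cuttings $\calP_k\sci B(0,n)$ converge up to translations tending to $0$; for the four combinatorial classes the restrictions are finite (by the implicit bound on the number of tiles or points in $B(0,n)$) and a compactness argument yields an actual limit patch/point set in $B(0,n)$, which the diagonal glues to an object of $\Sigma$; for weighted Dirac combs one uses sequential compactness in the vague topology on translation-bounded measures, together with a verification that the limit measure is itself a weighted Dirac comb by inspection of supports. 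A final check that $\calU_{B(0,n),B(0,1/m)}$-closeness passes to the limit completes the proof.

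The main obstacle I expect is the measure-theoretic class (weighted Dirac combs): unlike the combinatorial classes, where uniform discreteness provides rigidity and a clean compactness argument on finite configurations, here one must argue entirely through the vague topology and verify that the class of translation-bounded measures is closed under the limiting operation, and that separatedness cannot be spoiled by the interplay between translation and reweighting. The other four classes reduce to essentially the same combinatorial template driven by uniform discreteness of the underlying point patterns.
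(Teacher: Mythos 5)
The paper states this proposition without proof (it is imported from the reference on local matching topologies), so there is no in-text argument to compare yours against; your overall strategy---a countable cofinal family $\calU_{B(0,n),B_{1/m}}$ plus separatedness for metrizability, and a diagonal gluing of eventually-stabilized restrictions for completeness---is the standard one. Two of your intermediate claims are nevertheless false as written. For uniformly discrete sets, $r$-uniform discreteness does \emph{not} forbid a match $(\calQ+x)\cap B(0,n)=\calP\cap B(0,n)$ with $0<\|x\|<r$: take $\calQ=\calP-x$. Separatedness instead follows by fixing $p\in\calP\cap B(0,n-1)$, observing that $p-x_{n,m}\in\calQ$ with $\|x_{n,m}\|<1/m$, and using uniform discreteness of $\calQ$ alone to force the sequence $p-x_{n,m}$ to be eventually constant and hence equal to $p$. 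For patches, your pigeonhole on ``the finitely many tiles of $\calQ$'' near $B(0,n)$ fails because a patch may contain infinitely many pairwise disjoint tiles inside a fixed ball (e.g.\ $\{(2^{-j-1},2^{-j})\mid j\geqq 0\}$ in $\mathbb{R}$); the correct rigidity comes from the fact that $T-x_{n,m}$ and $T-x_{n,m'}$ are tiles of $\calQ$ that overlap for large $m,m'$, hence coincide, hence $x_{n,m}=x_{n,m'}$, and the common value must be $0$.

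The genuine gap is the completeness step ``the diagonal glues to an object of $\Sigma$''. For patches the defining condition (pairwise non-overlap) is local and passes to the limit, but for $\Sigma=\UD(\Rd)=\bigcup_{r>0}\UD_r(\Rd)$ (class (3), and likewise the multi-set class (4)) membership is a \emph{uniform} condition and your argument does not address it---and cannot: in $\mathbb{R}$ the sets $\calP_k=\{0\}\cup\{j,\ j+2^{-j}\mid 1\leqq j\leqq k\}$ are each uniformly discrete and form a Cauchy sequence for the local matching uniform structure (their restrictions to $B(0,n)$ coincide exactly, with translation $0$, once $k,k'\geqq n$), while the only possible limit $\{0\}\cup\{j,\ j+2^{-j}\mid j\in\Zpo\}$ lies in no $\UD_r(\Rd)$. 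Your proof therefore only establishes completeness of each $\UD_r(\Rd)$ with $r$ fixed (where the separation constant survives the limit); to cover the union over $r$ you would need either to reinterpret the statement in that fixed-$r$ sense or to rule out Cauchy sequences with degenerating separation constants, which the example above shows cannot be done. The weighted Dirac comb case needs the analogous check, but there the exact-matching entourages make the restrictions to each ball eventually constant up to vanishing translations, which is far more rigid than vague convergence and is the fact to exploit.
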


The continuous hulls, the corresponding dynamical systems, FLC and repetitivity are
important in aperiodic order. We define these concepts via the framework of
abstract pattern space.

\begin{defi}\label{def_conti_hull}
      For an abstract pattern $\calP$,
      define \emph{the continuous hull} $X_{\calP}$ of $\calP$ by
       \begin{align*}
	    X_{\calP}=\overline{\{\calP+x\mid x\in\Rd\}}
       \end{align*}
        where the closure is taken with respect to the local matching topology.
       The translation action of $\Rd$ on $X_{\calP}$ define a dynamical system
       $(X_{\calP},\Rd)$, which is called \emph{the corresponding dynamical system
      for $\calP$}.
\end{defi}

\begin{defi}
       An abstract pattern $\calP$ has \emph{finite local complexity (FLC)} if
      for each compact $K\subset\Rd$, the set
       \begin{align*}
	      \{(\calP+t)\sci K\mid t\in\Rd\}
       \end{align*}
        is finite modulo $\Rd$-action.
\end{defi}

\begin{lem}\label{compact_inherit}
      \begin{enumerate}
       \item Suppose on a set $\Sigma$ of abstract patterns, the local matching uniform
	     structure
	     is complete. Then if $\calP\in\Sigma$ has FLC, the continuous hull
	     $X_{\calP}$ is compact. 
      \item Let $\calP$ be an abstract pattern.
	    Suppose $\Sigma$ is a set of abstract patterns on which the local matching
	    uniform structure is complete, $\calQ\in\Sigma$ and $\calP\LD\calQ$.
	    Then the map
	    \begin{align*}
	          \{\calP+t\mid t\in\Rd\}\ni\calP+t\mapsto\calQ+t\in
	          \{\calQ+t\mid t\in\Rd\}
	    \end{align*}
	    extends to a factor map $X_{\calP}\rightarrow X_{\calQ}$.
	    In particular, if $X_{\calP}$ is compact, 
	    then $X_{\calQ}$ is compact.
      \end{enumerate}
\end{lem}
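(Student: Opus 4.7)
For Part~1, $X_\calP$ is by construction closed in the complete uniform space $\Sigma$, hence itself complete; to conclude compactness it thus suffices to show total boundedness. Given a basic entourage $\calU_{K,V}$, I would take $V$ symmetric (replacing $V$ by $V\cap(-V)$ if needed) and set the enlarged compact $K_1 = K+V$. By FLC applied to $K_1$, the set $\{(\calP+t)\sci K_1 : t\in\Rd\}$ has only finitely many translation classes. For each class, the translations $x$ realizing $(\calP+t)\sci K_1 = ((\calP+t_i)\sci K_1)+x$ are forced by the support axiom to lie in the compact set $K_1 - K_1$ (whenever the patch has non-empty support). Covering $K_1 - K_1$ by finitely many $V$-translates centered at values $y_{i,j} = x(t_{i,j})$ arising from actual references $t_{i,j}$, I would check that every $t$ admits a reference $t_{i,j}$ with residual aligning translation $x' \in V$; the choice $K_1 = K+V$ combined with the symmetry of $V$ then gives $K \subseteq K_1 + x'$, so cutting back to $K$ produces $(\calP+t)\sci K = (\calP+t_{i,j}+x')\sci K$, i.e.\ $(\calP+t, \calP+t_{i,j}) \in \calU_{K,V}$. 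The finite family $\{\calP + t_{i,j}\}$ is then a $\calU_{K,V}$-cover of the orbit.

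For Part~2, I would produce $\bar\phi\colon X_\calP \to X_\calQ$ by unique continuous extension of $\phi(\calP + t) := \calQ + t$. First, well-definedness: if $\calP + t = \calP + s$ then $\calP$ is $(t-s)$-invariant, and applying Lemma~\ref{lem_local_derivability}(2) with $x_1 = y_1 = 0$, $x = 0$, $y = s-t$ yields $\calQ \sci B(0,R) = (\calQ + (t-s))\sci B(0,R)$ for every $R \geq 0$; Hausdorff-ness of the local matching uniform structure on $\Sigma$ then forces $\calQ + t = \calQ + s$. For uniform continuity, given $\calU_{K,V}$ in $X_\calQ$, pick $R$ with $K \subseteq B(0,R)$, let $R_1$ be the constant supplied by Lemma~\ref{lem_local_derivability}(2) for $x_1 = y_1 = 0$, and set $K' = B(0, R + R_1)$. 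If $(\calP + t_1, \calP + t_2) \in \calU_{K',V}$ with witness $x \in V$, then $(\calP + t_1)\sci K' = (\calP + t_2 + x)\sci K'$, and the lemma (with $s = t_1,\ u = t_2 + x$) gives $(\calQ + t_1)\sci B(0,R) = (\calQ + t_2 + x)\sci B(0,R)$, which restricts to $(\calQ + t_1, \calQ + t_2) \in \calU_{K,V}$.

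With uniform continuity established on the dense orbit of $\calP$, and using that $X_\calQ \subseteq \Sigma$ is closed and hence complete, $\phi$ extends uniquely to a continuous $\bar\phi\colon X_\calP \to X_\calQ$; continuity and density give $\bar\phi(X_\calP) \subseteq \overline{\{\calQ + t\}} = X_\calQ$, and equivariance $\bar\phi(\calR + t) = \bar\phi(\calR) + t$ passes from the orbit to the closure. If $X_\calP$ is compact, then $\bar\phi(X_\calP)$ is a compact (hence closed) subset of $X_\calQ$ containing the dense orbit $\{\calQ + t\}$, so $\bar\phi(X_\calP) = X_\calQ$ and $X_\calQ$ is compact.

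The main obstacle is the total-boundedness step of Part~1: FLC gives only finitely many translation classes of $K_1$-patches and does not a priori bound the aligning translations within a class. The support axiom of abstract pattern spaces is what confines those translations to the compact set $K_1 - K_1$, and the interplay between $V$, its symmetrization, and the choice $K_1 = K + V$ is what pins the residual translation inside the smaller entourage $\calU_{K,V}$. Degenerate situations (empty patches, non-uniqueness of the aligning translation) will require modest additional bookkeeping but no new ideas.
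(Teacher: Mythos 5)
The paper itself offers no proof of this lemma (it is quoted in the expository appendix from the companion work on local matching topologies), so there is nothing in-text to compare against; judged on its own, your argument is essentially correct and is the natural one. In Part~1 the decisive point, which you identify correctly, is that the finite $V$-net of $K_1-K_1$ must consist of aligning translations $y_{i,j}$ that actually occur, witnessed by genuine parameters $t_{i,j}$: only then do the two identities $(\calP+t)\sci K_1=((\calP+t_i)\sci K_1)+x$ and $(\calP+t_{i,j})\sci K_1=((\calP+t_i)\sci K_1)+y_{i,j}$ combine into $(\calP+t)\sci K_1=((\calP+t_{i,j})\sci K_1)+x'$ with $x'=x-y_{i,j}$ small, after which cutting with $K\subset K_1\cap(K_1+x')$ (using $K_1=K+V$ and $-x'\in V$) gives $(\calP+t)\sci K=(\calP+t_{i,j}+x')\sci K$. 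Had you used an arbitrary $V$-net of $K_1-K_1$, the cut-down step would fail, since $(\calP+t_i+y)\sci(K_1+y)$ says nothing about $\calP+t_i+y$ on $K\setminus(K_1+y)$ for large $y$. (Minor bookkeeping: to extract a net consisting of occurring translations, cover $K_1-K_1$ by translates of a $V'$ with $V'-V'\subset V$.) Part~2 is the standard uniform-continuity-plus-completeness extension and is fine; note that well-definedness and unique extension also require the local matching uniform structure to be Hausdorff, which the lemma does not state but which holds in all the paper's examples and is needed for the statement to be meaningful anyway.

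The one substantive caveat is that the degenerate case is not merely bookkeeping at the stated level of generality. Take $\Pi=\Rd$ as a set, with cutting-off operation $\calP\sci C=\calP$ for every closed $C$ (so $\supp\calP=\emptyset$ for all $\calP$) and translation acting by addition. This satisfies all the axioms, $\calU_{K,V}=\{(\calP,\calQ)\mid \calP-\calQ\in V\}$ generates the standard (complete, Hausdorff) uniform structure on $\Rd$, every element has FLC because the whole space is a single orbit, and yet $X_{\calP}=\Rd$ is not compact. So when the empty-support patterns form a nontrivial $\Rd$-orbit, the support axiom gives your total-boundedness argument nothing to anchor to and the conclusion itself can fail. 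To close the argument you must check --- as is true for patches, uniformly discrete (multi)sets, maps and translation-bounded measures --- that emptiness of $\supp((\calP+t)\sci K_1)$ forces $(\calP+t)\sci K$ to be a single translation-fixed element, so that the degenerate class contributes one harmless representative; in full abstract generality an additional hypothesis of this kind has to be imported from the cited source.
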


\begin{rem}
       Often if the continuous hull $X_{\calP}$ is compact, then $\calP$ has FLC.
       Thus in many cases, FLC is inherited by local derivability.
\end{rem}

We now define repetitivity.
\begin{defi}
        An abstract pattern $\calP$ is said to be \emph{repetitive} if, whenever we take
        compact $K\subset\Rd$, the set
        \begin{align*}
	     \{x\in\Rd\mid (\calP-x)\sci K=\calP\sci K\}
	\end{align*}
        is relatively dense.
\end{defi}

\begin{lem}\label{lem_repeti_inherits}
     Let $\calP_1$ and $\calP_2$ be abstract patterns and suppose $\calP_1\LD\calP_2$.
     If $\calP_1$ is repetitive then so is $\calP_2$.
\end{lem}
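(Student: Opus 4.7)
The plan is to unwind the definitions directly: repetitivity is a statement about large-scale density of sets of translations that preserve a cut-off of the pattern, and local derivability says precisely that such equalities transfer from $\calP_1$ to $\calP_2$ at the cost of enlarging the cut-off region.

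First, fix an arbitrary compact $K\subset\Rd$; the goal is to show $\{x\in\Rd\mid(\calP_2-x)\sci K=\calP_2\sci K\}$ is relatively dense. Choose any $y_1\in\Rd$ and $R>0$ so that $K\subset B(y_1,R)$. By the first axiom in the definition of an abstract pattern space, $(\calP\sci B(y_1,R))\sci K=\calP\sci(B(y_1,R)\cap K)=\calP\sci K$ for every $\calP$, and therefore every equality of the form $(\calP_2-x)\sci B(y_1,R)=\calP_2\sci B(y_1,R)$ automatically implies $(\calP_2-x)\sci K=\calP_2\sci K$.

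Next, apply the second (``for all $x_1,y_1$'') formulation of Lemma~\ref{lem_local_derivability} to the hypothesis $\calP_1\LD\calP_2$, with the $y_1$ chosen above and $x_1:=y_1$. This yields a constant $R_1\geqq 0$ such that, for all $x\in\Rd$,
\begin{align*}
(\calP_1-x)\sci B(y_1,R+R_1)=\calP_1\sci B(y_1,R+R_1)
\end{align*}
implies $(\calP_2-x)\sci B(y_1,R)=\calP_2\sci B(y_1,R)$, which by the previous paragraph implies $(\calP_2-x)\sci K=\calP_2\sci K$.

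Finally, invoke the repetitivity of $\calP_1$ with the compact set $K':=B(y_1,R+R_1)$: the set $E:=\{x\in\Rd\mid (\calP_1-x)\sci K'=\calP_1\sci K'\}$ is relatively dense in $\Rd$. The chain of implications above shows $E\subset\{x\mid(\calP_2-x)\sci K=\calP_2\sci K\}$, so the latter, being a superset of a relatively dense set, is itself relatively dense. Since $K$ was arbitrary, $\calP_2$ is repetitive. There is no real obstacle here; the only subtlety is remembering to enlarge the cut-off radius from $R$ to $R+R_1$ before applying repetitivity of $\calP_1$.
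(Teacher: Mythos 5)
Your proof is correct. The paper states this lemma without proof (it appears in the appendix, which only outlines the framework from \cite{Nagai4th}), and your argument is exactly the expected one: enlarge the cut-off window using condition (2) of Lemma~\ref{lem_local_derivability} with $x=x$, $y=0$, apply repetitivity of $\calP_1$ to the enlarged ball $B(y_1,R+R_1)$, and use the axiom $\calP\sci(C_1\cap C_2)=(\calP\sci C_1)\sci C_2$ to pass from the ball $B(y_1,R)\supset K$ back down to $K$; every step checks out.
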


\bibliographystyle{amsplain}
\bibliography{tiling}

\end{document}